\title{Asymptotics for the ruin time of a piecewise exponential Markov process with jumps}
\author{Anders R\o nn-Nielsen\\
\normalsize Department of Mathematical Sciences, 
University of Copenhagen}
\date{}
\newcommand{\dd}{{\mathrm{d}}}
\newcommand{\ee}{{\mathrm{e}}}
\newcommand{\RR}{\mathbb {R}}
\newcommand{\CC}{\mathbb {C}}
\newcommand{\PPx}{\mathbb {P}^x}
\newcommand{\EE}{\mathbb {E}}
\newcommand{\EEx}{\mathbb {E}^x}
\newcommand{\ZZ}{\mathbb {Z}}
\newcommand{\AAA}{\mathcal {A}}
\newtheorem{Th}{Theorem}[section]
\newtheorem{Lemma}{Lemma}[section]
\newtheorem{Ex}{Example}[section]
\newtheorem{Cor}{Corollary}[section]
\newtheorem{Def}{Definition}[section]
\newtheorem{Con}{Condition}[section]
\newtheorem{Rem}{Remark}[section]
\begin{document}\maketitle

\begin{abstract}
In this paper a class of Ornstein--Uhlenbeck processes driven by
compound Poisson processes is considered. The jumps arrive with exponential waiting times and are
allowed to be two-sided. The jumps are assumed to form an iid sequence
with distribution a mixture (not necessarily convex) of exponential distributions, independent of
everything else. The fact that downward jumps are allowed makes
passage of a given lower level possible both by continuity and by a
jump. The time of this passage and the possible undershoot (in the
jump case) is considered. By finding partial eigenfunctions for the
infinitesimal generator of the process, an expression for the joint
Laplace transform of the passage time and the undershoot can be found.

From the Laplace transform the ruin probability of ever crossing the
level can be derived. When the drift is negative this probability is
less than one and its asymptotic behaviour when the initial state of the process
tends to infinity is determined explicitly.

The situation where the level to cross decreases to minus infinity is
more involved: The level to cross plays a much more fundamental
role in the expression for the joint Laplace transform than the
initial state of the process. The limit of the ruin probability in the
positive drift case and the limit of the distribution of the
undershoot in the negative drift case is derived.

{\small \noindent\emph{Keywords:} Asymptotic ruin probabilities; Integration contour; Ornstein--Uhlenbeck process; Partial Eigenfunction; Shot--noise process}

\end{abstract}


\section{Introduction}
The main aim of this paper is to determine the asymptotic behaviour of
the ruin probability for a certain class of time--homogeneous Markov
processes with jumps. These processes, referred to as $X$ below, can be viewed as
Ornstein--Uhlenbeck processes satisfying
\begin{equation}\label{ligning11}
\dd X_t=\kappa X_t\, \dd t+\dd U_t\,,
\end{equation}
driven by a compound Poisson process $(U_t)$. The ruin time, $\tau(\ell)$, is defined as the time to passage below $\ell$ for an
initial state $x>\ell$. The passage below $\ell$ can be a result of a
downward jump, and in some cases a continuous passage through $\ell$ is
is also possible. The main results give asymptotic descriptions of $\PPx(\tau(\ell)<\infty)$, when $\kappa>0$ in the limits $x\to\infty$ and $\ell\to-\infty$. Furthermore, the limit distribution of the undershoot in case of passage by jump is determined for $\kappa<0$ and $\ell\to-\infty$.

It will be assumed that the driving compound Poisson process has
a special jump structure. Both the downward and upward jumps are
assumed to have a density (not the same) that is a linear -- not
necessarily convex -- combination of exponential densities

It is important to distinguish between two different scenarios:
Whether the drift $\kappa$ is positive, hence $X$ is transient, or the drift is negative, in which case the process $X$
is recurrent. In the negative drift case the probability
$\PPx(\tau(\ell)<\infty)$ (with $\tau(\ell)$ denoting the time of
passage) of ever crossing below $\ell$ when starting
at $x$ is always 1. When the drift $\kappa$ is
positive we have that $\PPx(\tau(\ell)<\infty)<1$, and this
probability decreases when either $x\to\infty$ or $\ell\to-\infty$.

The distribution of the passage time (and by that also the ruin
probability) is determined through the Laplace transform. This is
found by exploiting certain stopped martingales derived from using
bounded partial eigenfunctions for the infinitesimal generator for
$X$. An explicit expression for the Laplace transform is determined in
\cite{art3}. Here the partial eigenfunctions are found as linear
combinations of functions given by contour integrals in the complex
plane. Also the Laplace transform ends up being a linear combination
of these integrals. It is the resulting Laplace transform from
\cite{art3} that we shall investigate in this paper.

In the present paper the asymptotics of $\PPx(\tau(\ell)<\infty)$
is explored in both of the situations $x\to\infty$ and
$\ell\to-\infty$. This becomes a question about finding the asymptotics
for the complex contour integrals mentioned above. It turns out that
the $\ell\to-\infty$ problem is the far most complicated because the dependence of $\ell$ in the
construction of the partial eigenfunctions is more
involved. Nevertheless, the need of exploring the asymptotic behaviour
of the integrals is similar. When $x\to\infty$ we see that
$\PPx(\tau(\ell)<\infty)$ decreases exponentially (adjusted by some
specified power function) with the exponential parameter from the
leading exponential part of the downward jumps. 

The technique of using partial eigenfunctions for the infinitesimal
generator has appeared before. Paulsen and Gjessing, \cite{gjessing},
considers a model like the present, but in the more general (and
also different) setup
\begin{equation}\label{stormodel}
dX_t=(p+\kappa X_t)\, dt-dU_t+\sqrt{\sigma_1^2+\sigma_2^2X_t^2}\,dB_t\,+X_td\tilde{U}_t\,.
\end{equation}
Here both $U$ and $\tilde{U}$ are compound Poisson processes of the
form $\sum_{n=1}^{N_t}V_n$. In
\cite{gjessing} it is shown that a partial eigenfunction for the
corresponding infinitesimal generator for (\ref{stormodel}) will lead
to the ruin probability  and also the Laplace transform for the ruin
time. \cite{gaier} shows in a model without $\sigma_1^2$
and $\tilde{U}$ the existence of this partial
eigenfunction under some smoothness assumptions about the jump distributions in $U$. This
result is extended to weaker assumptions in \cite{grandits}.

In the case of $\sigma_1^2=\sigma_2^2=0$, without $\tilde{U}$, and assuming exponential negative jump and no positive
jumps, an explicit formula for the Laplace transform is
determined in \cite{gjessing}. Furthermore, the exponential decrease in
$\PPx(\tau(\ell)<\infty)$ is derived in the $x\to\infty$ asymptotic
situation for some fixed $0<\ell<x$. For the case of exponential negative jumps also see
Asmussen \cite{asm}, Chapter VII.

In the present paper the jump distributions are assumed to be light
tailed. The existing literature does not contain very explicit results
for the asymptotic ruin probability with that kind of jump
distributions. In \cite{embrechts} and \cite{schmidli} it is proved in the $\sigma_2^2=0$
case with $\kappa=\sup\{a\;|\;\EE[e^{aU}]<\infty\}$ that for any $\epsilon>0$
$$
\lim_{x\to\infty}\ee^{(\kappa-\epsilon)x}\PPx(\tau_{\ell}<\infty)=0\quad\textrm{and}\quad\lim_{x\to\infty}\ee^{(\kappa+\epsilon)x}\PPx(\tau_{\ell}<\infty)=\infty\,.
$$
In the case of heavy tailed jump distributions there are more explicit
results for the asymptotic behaviour of the ruin probability. In
\cite{jiang} results are obtained for the asymptotics of the finite
horizon ruin probability $\PPx(\tau(\ell)\leq T)$ in a fairly general
model with $\sigma_2^2=0$ and subexpontial jump distributions. Similar
results are reached in \cite{chen} in the infinite horizon case. Here
the jumps belong to a less general class of heavy tailed distributions.

In \cite{art1}, \cite{art2}, \cite{art4} the following model class of certain
Markov modulated L\'evy processes 
$$
X_t=x+\int_0^t\beta_{J_s}\,\dd s +\int_0^t\sigma_{J_{s-}}\dd
B_s-\sum_{n=1}^{N_t}U_n
$$
is studied. The same partial eigenfunction technique is applied, and it
is showed that the partial eigenfunctions (and thereby also the ruin
probabilities) can be expressed as
a linear combination of exponential functions (evaluated in the starting point
$x$). Hence, the asymptotic behaviour of the probability when
$x\to\infty$ reduces to finding the exponential function
with the slowest decrease. Since the model is additive, the level $\ell$ that
is to be crossed at the time of ruin, enters into the setup
symmetric to $x$. Hence, the asymptotics when $\ell\to-\infty$ are just
as easy to derive.
In Novikov et. al, \cite{novikov1}, the Laplace transform is
determined for a shot--noise model with exponentially distributed
downward jumps (and no positive jumps allowed) for a process with
negative drift. The Laplace transform
was also derived in the case of uniformly distributed downward jumps.
In \cite{novikov2} these results are extended to a more general
driving L\'evy process instead of a compound Poisson process. In \cite{novikov2,novikov1} some asymptotic results for the distribution of $\tau(\ell)$ are carried out. Here the limit
distribution of $\tau(\ell)$ is expressed when $\ell\to-\infty$ for
some fixed starting point $x$ and negative drift. This is a limit that is not considered in the present paper.

The paper is organised as follows. In Section
\ref{setup} the setup is
defined and the relevant results from \cite{art3} reproduced. Theorem~\ref{satningfin} is also reformulated in a different (and
appearently more complicated) version as Theorem~\ref{storogfin} that turns out to fit the
asymptotic considerations better. In Section~\ref{contours} the choice of some complex integration contours
that are applied in Theorem~\ref{satningfin} and Theorem~\ref{storogfin}
is discussed. This choice differs from the proposed contours in
\cite{art3} in order to suit the further calculations. In Section
\ref{xasympt} the asymptotic behaviour of $\PPx(\tau(\ell)<\infty)$ is
expressed when $x\to\infty$ and in Section \ref{lasympt} the limit
when $\ell\to-\infty$ is found. Finally the limit of the distribution of the
undershoot is expressed for the negative drift case when $\ell\to-\infty$.

\section{The model and previous results}\label{setup}
Consider a process $X$ with state space $\RR$ defined by the following stochastic differential
equation:
\begin{equation}\label{ligning1}
\dd X_t=\kappa X_t\, \dd t+\dd U_t\,,
\end{equation}
where $(U_t)$ is a compound
Poisson process defined by
\begin{equation}\label{compound}
U_t=\sum_{n=1}^{N_t}V_n.
\end{equation}
Here $(V_n)$ are iid with distribution $G$ and $(N_t)$ is a Poisson
process with parameter $\lambda$. Both the downward
and the upward part of the jump distribution $G$ is assumed to be  a linear combination of exponential
distributions. We use the decomposition $G=pG_-+qG_+$ where $0<p\leq 1$,
$q=1-p$, $G_-$ is restricted to $\RR_-=(-\infty;0)$ and $G_+$ is
restricted to $\RR_+=(0;\infty)$. That is,
\begin{align}\label{fml:gdensity}
G_-(du)&=g_-(u)\,du=\sum_{k=1}^r\alpha_k\mu_k\ee^{\mu_ku}\qquad
\textrm{for } u<0\nonumber\\
G_+(du)&=g_+(u)\,du=\sum_{d=1}^s\beta_d\nu_d\ee^{-\nu_du}\qquad
\textrm{for } u>0\,. 
\end{align} 
The distribution parameters are arranged such that
$0<\mu_1<\cdots<\mu_r$, $0<\nu_1<\cdots<\nu_s$ and
$\alpha_i,\beta_j\neq 0$. Since $g_-$ and
$g_+$ need to be densities $\sum\alpha_i=1$ and
$\sum\beta_j=1$. Furthermore both $\alpha_1>0$ and $\beta_1>0$. The remaining density parameters are not necessarily non--negative.\\[2mm]
Between jumps the solution process $X$ behaves deterministically following an
exponential function. Assume $x>0$ and write $\PPx$ for the probability space, where
$X_0=x$ $\PPx$--almost surely. Let $\EEx$ be the corresponding
expectation. Define for $\ell<x$ the stopping time $\tau$ by
\begin{equation}\label{tau}
\tau = \tau(\ell)=\inf\{t>0\;:\;X_t\leq \ell\}\,.
\end{equation}
For ease of notation $\ell$ is most often suppressed. Furthermore define the \emph{undershoot}
\begin{equation}\label{underskud}
Z= \ell-X_{\tau}\,,
\end{equation}
which is well--defined on the set $\{\tau <\infty\}$. Note that the level $\ell$ can by crossed through continuity as well as
a result of a downward jump. Of interest is a joint expression about $(\tau<\infty)$ and the distribution of $Z$. This is expressed through the expressions
\begin{equation}\label{laplace}
\EEx[e^{-\zeta Z};A_j]\quad \textrm{and}\quad
\EEx[A_c]\,,
\end{equation}
where  $A_j$ and $A_c$ is a partition of the set $\{\tau<\infty\}$ into
the jump case $A_j=\{\tau<\infty,X_{\tau}<\ell\}$ and the continuity case
$A_c=\{\tau<\infty,X_{\tau}=\ell\}$. The expressions in (\ref{laplace}) can be found from solving two equations
\begin{equation}\label{equations}
\EEx[\ee^{-\zeta Z};A_j]+f_i(\ell)\EEx[A_c]=f_i(x)\,,\quad i=1,2\,,
\end{equation}
where $f_1$ and $f_2$ are partial eigenfunctions for the
infinitesimal generator $\AAA$ for the process: $f_i:\RR\to\CC$ are bounded and differentiable on $[\ell;\infty)$ and satisfy
the condition that
$$
\AAA f_i(x)=0 \quad\textrm{for all }x\in [\ell;\infty)\,,
$$
where $\AAA$ is defined by
\begin{equation}
\AAA f(x)=\kappa x f'(x)+\lambda\int_{\RR}\Big(f(x+y)-f(x)\Big)G(\dd y)\,,
\end{equation}
for details, see \cite{art3}. In addition, each $f_i$ has the following exponential form
on the interval $(-\infty;\ell)$
$$
f_i(x)=\ee^{-\zeta (\ell-x)}\quad \textrm{for }x<\ell\,.
$$
It is important to notice that there exists some situations where only
one partial eigenfunction is needed: If $\ell\kappa>0$ the probability
$\PPx(A_c)$ of crossing $\ell$ through continuity is 0 
(recall that the process is deterministic and monotone between jumps). In this case finding $\EEx[\ee^{-\zeta Z};A_j]$ is even simpler (from (\ref{equations}) with the
$A_c$ part equal 0):
\begin{equation}\label{equation}
\EEx[\ee^{-\zeta Z};A_j]=f(x)\,,
\end{equation}
where $f$ is the single partial eigenfunction.\\[2mm]
In the negative drift case ($\kappa<0$) the recurrence of $X$ gives that
$\PPx(A_j)+\PPx(A_c)=\PPx(\tau<\infty)=1$. If furthermore $\zeta=0$ the desired expressions in (\ref{laplace}) reduce to the probabilities $\PPx(A_j)$ and
$\PPx(A_c)$. Hence, only one partial eigenfunction is needed in order
to solve the equation.

In \cite[Theorem 4]{art3} a result is given that sketches how to
construct such partial eigenfunctions. In the following this theorem
is reformulated in order to fit the further calculations. Define
\begin{equation}\label{fnul}
f_0(y)=\left\{\begin{array}{ll}
0 & y\geq \ell\\
\ee^{-\zeta(\ell-y)} & y<\ell
\end{array}\right.\,,
\end{equation}
and
\begin{equation}\label{fgamma}
f_{\Gamma}(y)=\left\{\begin{array}{ll}
\int_{\Gamma}\psi(z)\ee^{-yz}\,\dd z & y\geq \ell\\
0 & y<\ell
\end{array}\right.\;,
\end{equation}
where $\psi$ is the complex valued kernel defined by
\begin{equation}\label{psinul}
\psi(z)=z^{-1}\Bigg(\prod_{k=1}^r(z-\mu_k)^{-\tfrac{p\lambda\alpha_k}{\kappa}}\Bigg)\Bigg(\prod_{d=1}^s(z+\nu_d)^{-\tfrac{q\lambda\beta_d}{\kappa}}\Bigg)\,,
\end{equation}
and $\Gamma$ is some suitable curve in the complex plane of the form
$\Gamma=\{\gamma(t):\delta_1<t<\delta_2\}$ for $-\infty\leq \delta_1<\delta_2\leq \infty$. The parameters $\alpha_k,\mu_k,\beta_d,\nu_d$ are given in (\ref{fml:gdensity}). Note that
\begin{equation}\label{order}
|\psi(z)|=O\left(|z|^{-1-\lambda/\kappa}\right)\,,
\end{equation}
when $|z|\to\infty$.
\begin{Th}\label{satningfin}
Let $\zeta\geq 0$ be given and let $f_0$ and
$f_{\Gamma_i}$ be defined as in (\ref{fnul}) and (\ref{fgamma}) for
$i=1,\ldots,m$, such that all $\Gamma_i$ are concentrated on the
positive part of the complex plane $\CC_+=\{z\in\CC:\mathrm{Re}z\geq
0\}$. Assume that for each contour $\Gamma_i$ a holomorfic version of $\psi$
exists that contains the contour. Assume furthermore that for $i=1,\ldots,m$ it holds that  
\begin{itemize}
\item[\textrm{(i)}]$\int_{\Gamma_i}|\psi(z)|\ee^{-\ell \mathrm{Re} z}\,\dd z<\infty$
\item[\textrm{(ii)}]$\int_{\Gamma_i}|\psi(z)|\,|z|\ee^{-\ell \mathrm{Re}
    z}\,\dd z<\infty\quad$
\item[\textrm{(iii)}]$\int_{\Gamma_i}|\tfrac{\psi(z)}{z-\mu_k}|\ee^{-\ell
    \mathrm{Re} z}\,\dd z<\infty$
\item[\textrm{(iv)}]$\psi(\gamma_i(\delta_{i1}))\gamma_i(\delta_{i1})\ee^{-y\gamma(\delta_{i1})}=\psi(\gamma_i(\delta_{i2}))\gamma_i(\delta_{i2})\ee^{-y\gamma_i(\delta_{i2})}$\,.
\end{itemize}
Define
\begin{equation}\label{fdef}
f(y)=\sum_{i=1}^mc_if_{\Gamma_i}(y)+f_0(y)\,.
\end{equation}
If the constants $c_1,\ldots,c_m$ are chosen such that
\begin{equation}\label{ligningssystemet}
\sum_{i=1}^mc_iM_{\Gamma_i}^k+\frac{\mu_k}{\mu_k+\zeta}=0
\end{equation}
for $k=1,\ldots,r$ where $M_{ik}$ is given by
$$
M_{\Gamma_i}^k=\mu_k
\int_{\Gamma_i}\frac{\psi(z)}{z-\mu_k}\ee^{-\ell z}\,\dd z
$$
for $i=1,\ldots,m$ and $k=1,\ldots,r$, then $f$ is a partial
eigenfunction for the generator $\AAA$.
\end{Th}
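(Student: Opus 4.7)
The strategy is to apply $\AAA$ termwise to the decomposition $f = f_0 + \sum_{i=1}^m c_i f_{\Gamma_i}$ on $[\ell,\infty)$ and show that all contributions collapse to zero under (\ref{ligningssystemet}). Since $f_0$ vanishes on $[\ell,\infty)$, the only contribution to $\AAA f_0(x)$ for $x \geq \ell$ comes from downward jumps landing in $(-\infty,\ell)$. Plugging the explicit mixture $g_-(y) = \sum_k \alpha_k \mu_k \ee^{\mu_k y}$ into the jump integral and computing an elementary geometric integral yields
$$\AAA f_0(x) = \lambda p \sum_{k=1}^r \alpha_k \frac{\mu_k}{\mu_k + \zeta} \ee^{\mu_k(\ell - x)}.$$

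For each $f_{\Gamma_i}$ the plan is to reduce $\AAA f_{\Gamma_i}(x)$ to the same shape. First I would rewrite $\kappa x f'_{\Gamma_i}(x) = -\kappa x \int_{\Gamma_i} z\psi(z) \ee^{-xz}\,\dd z$ using $-x\ee^{-xz} = \tfrac{\dd}{\dd z}\ee^{-xz}$ and integrate by parts along $\Gamma_i$; condition (iv) is exactly what kills the endpoint terms, leaving $-\kappa \int_{\Gamma_i}(z\psi(z))' \ee^{-xz}\,\dd z$. Next, applying Fubini (justified by (i)) to the upward-jump integral produces the Laplace transform of $g_+$ minus $1$, which equals $-\sum_d \beta_d z/(z+\nu_d)$, inside the contour integral. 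Finally, split the downward-jump integral at $y = \ell - x$: for $y > \ell - x$ use the contour representation of $f_{\Gamma_i}(x+y)$, while for $y < \ell - x$ the function vanishes and only the $-f_{\Gamma_i}(x)$ contribution survives. The central inner integral
$$\int_{\ell - x}^0 \ee^{(\mu_k - z) y}\,\dd y = \frac{1 - \ee^{(\mu_k - z)(\ell - x)}}{\mu_k - z}$$
splits into a rational piece in $z$ and a boundary piece that sums over $k$ to exactly $\lambda p \sum_k \alpha_k \ee^{\mu_k(\ell - x)} M_{\Gamma_i}^k$.

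Collecting all three contributions one is left with three families of contour integrals, with integrands $\psi(z) \ee^{-xz}$, $\psi(z) \ee^{-xz}/(z-\mu_k)$ and $\psi(z) \ee^{-xz}/(z+\nu_d)$, plus the boundary term above. The crucial step is that the three contour-integral coefficients cancel identically: this is precisely the statement
$$\frac{\psi'(z)}{\psi(z)} = -\frac{1}{z} - \frac{p\lambda}{\kappa}\sum_{k=1}^r \frac{\alpha_k}{z - \mu_k} - \frac{q\lambda}{\kappa}\sum_{d=1}^s \frac{\beta_d}{z + \nu_d},$$
which follows from the product form (\ref{psinul}) combined with $\sum_k \alpha_k = \sum_d \beta_d = 1$. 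Thus $\AAA f_{\Gamma_i}(x) = \lambda p \sum_k \alpha_k \ee^{\mu_k(\ell - x)} M_{\Gamma_i}^k$, and summing with weights $c_i$ and adding $\AAA f_0$ gives
$$\AAA f(x) = \lambda p \sum_{k=1}^r \alpha_k \ee^{\mu_k(\ell - x)} \left( \sum_{i=1}^m c_i M_{\Gamma_i}^k + \frac{\mu_k}{\mu_k + \zeta} \right),$$
which vanishes for every $x \geq \ell$ precisely when (\ref{ligningssystemet}) holds.

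The main obstacle is the algebraic cancellation just described; it depends sensitively on the specific fractional exponents $-p\lambda\alpha_k/\kappa$ and $-q\lambda\beta_d/\kappa$ built into (\ref{psinul}), and is the reason this particular $\psi$ produces partial eigenfunctions. The remaining conditions are used for standard analytic bookkeeping: (i) and (iii) ensure absolute convergence of the contour integrals --- note that $|\ee^{-xz}| \leq \ee^{-\ell\,\mathrm{Re}\,z}$ for $x \geq \ell$ and $\mathrm{Re}\,z \geq 0$, so the uniform bounds with weight $\ee^{-\ell\,\mathrm{Re}\,z}$ suffice --- while (ii) controls the drift term $x f'_{\Gamma_i}(x)$ and (iv) is exactly the boundary identity required by the integration by parts. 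Boundedness of $f$ and the prescribed form $\ee^{-\zeta(\ell-x)}$ on $(-\infty,\ell)$ are immediate from the definitions of $f_0$ and $f_{\Gamma_i}$.
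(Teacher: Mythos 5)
Your proposal is correct and follows the same approach that the paper (via \cite[Theorem~4]{art3} and the analogous computation in the proof of Theorem~\ref{storogfin}) uses: apply $\AAA$ term by term to the decomposition $f=f_0+\sum_i c_if_{\Gamma_i}$, kill the drift endpoint terms with (iv), reduce the jump integrals to contour integrals via Fubini, and let the logarithmic derivative identity for $\psi$ together with $\sum_k\alpha_k=\sum_d\beta_d=1$ produce the cancellation, leaving $\AAA f_{\Gamma_i}(x)=p\lambda\sum_k\alpha_k\ee^{\mu_k(\ell-x)}M^k_{\Gamma_i}$ so that \refs{ligningssystemet} forces $\AAA f\equiv 0$ on $[\ell,\infty)$.
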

The theorem shows what it takes to construct a partial eigenfunction:
As many $f_{\Gamma_i}$--functions integration contours such that the equation system
(\ref{ligningssystemet}) can be solved. For the construction of one
partial eigenfunction $m=r$ integration contours are needed (note that
the equation system is inhomogeneous and has $m$ unknowns). If an additional
eigenfunction is requested $m=r+1$ different integration contours
should be found. To solve the equation system (\ref{ligningssystemet}) with respect to the unknowns $c_1,\ldots,c_m$
implies that the vectors $M_{\Gamma_i}=(M_{\Gamma_i}^1,\ldots,M_{\Gamma_i}^m)$ for $i=1,\ldots,r$ have to be linearly independent.\\[2mm]
Theorem \ref{satningfin} can be used for all values of $\ell$. However, it
restricts the choice of integration
contours. That makes the following adapted theorem useful. Define two new versions of the
$f_\Gamma$--functions:
\begin{eqnarray}\label{nydefsigma}
f^1_{\Gamma_1}(y)&=&\left\{\begin{array}{ll}
\int_{\Gamma_1}\psi(z)\ee^{-yz}\,\dd z & y> 0\\
0 & y<0
\end{array}\right.\nonumber\\
f^2_{\Gamma_2}(y)&=&\left\{\begin{array}{ll}
\int_{\Gamma_2}\psi(z)\ee^{-yz}\,\dd z & \ell \leq y<0\\
0 & \textrm{otherwise}
\end{array}\right.\;.
\end{eqnarray}
For convenience we shall use the following definitions
\begin{Def}\label{definition}
\begin{eqnarray*}
&\phantom{=}&
M^{1k}_{\Gamma_i}=\int_{\Gamma_{i1}}\frac{\psi(z)}{z-\mu_k}\dd z
\qquad i=1,\ldots,m,\quad k=1,\ldots,r\\
&\phantom{=}&
M^{2d}_{\Gamma_i}=\int_{\Gamma_{i1}}\frac{\psi(z)}{\nu_d+z}\dd z
\qquad i=1,\ldots,m,\quad d=1,\ldots,s\\
&\phantom{=}&
N^{1k}_{\Gamma_j}=\int_{\Gamma_{j2}}\frac{\psi(z)}{\mu_k-z}\,\dd z
\qquad j=1,\ldots,n,\quad k=1,\ldots,r\\
&\phantom{=}&N^{2d}_{\Gamma_j}=\int_{\Gamma_{j2}}\frac{\psi(z)}{\nu_d+z}\dd
z
\qquad j=1,\ldots,n,\quad d=1,\ldots,s\\
&\phantom{=}&N^{3k}_{\Gamma_j}=\int_{\Gamma_{j2}}\frac{\psi(z)}{z-\mu_k}\ee^{-\ell z}\dd
z
\qquad j=1,\ldots,n,\quad k=1,\ldots,r\,.
\end{eqnarray*}
\end{Def}
We will need
\begin{Con}\label{conditions}
Let $\zeta\geq 0$ be given and let $f_0$,
$f^1_{\Gamma_{i1}}$ and $f^2_{\Gamma_{j2}}$ be defined as in (\ref{nydefsigma}) for
$i=1,\ldots,m$ and $j=1,\ldots,n$ such that all $\Gamma_{i1}\subset
\CC_+$ and $\Gamma_{j2}\subset\CC$ are suitable complex curves
($\psi$ should have holomorfic versions containing these
curves). Assume for $\psi$ and $\Gamma_{i1}$, $i=1,\ldots,m$, that
\begin{itemize}
\item[\textrm{(i)}]$ \int_{\Gamma_{i1}}|\psi(z)|\,\dd z<\infty$
\item[\textrm{(ii)}]$\int_{\Gamma_{i1}}|\psi(z)|\,|z|\ee^{-y \mathrm{Re}
    z}\,\dd z<\infty\quad$ for all $y>0$ 
\item[\textrm{(iii)}]$\int_{\Gamma_{i1}}|\tfrac{\psi(z)}{z-\mu_k}|\,\dd z<\infty$
  for $k=1,\ldots,r$
\item[\textrm{(iv)}]$\int_{\Gamma_{i1}}|\tfrac{\psi(z)}{z+\nu_d}|\,\dd z<\infty$
  for $d=1,\ldots,s$
\item[\textrm{(v)}]$
\psi(\gamma_{i1}(\delta_{i1}))\gamma_{i1}(\delta_{i1}^1)\ee^{-y\gamma_{i1}(\delta_{i1}^1)}=\psi(\gamma_{i1}(\delta_{i2}^1))\gamma_{i1}(\delta_{i2}^1)\ee^{-y\gamma_{i1}(\delta_{i2}^1)}$
for all $y>0$\,,
\end{itemize}
and similarly for $\psi$ and $\Gamma_{j2}$ that
\begin{itemize}
\item[\textrm{(i')}]$ \int_{\Gamma_{j2}}|\psi(z)|\,\dd z<\infty$
\item[\textrm{(ii')}]$ \int_{\Gamma_{j2}}|\psi(z)|\ee^{-\ell\textrm{Re}z}\,\dd
  z<\infty$
\item[\textrm{(iii')}]$\int_{\Gamma_{j2}}|\psi(z)|\,|z|\ee^{-y \mathrm{Re}
    z}\,\dd z<\infty\quad$ for all $y\in [\ell;0[$
\item[\textrm{(iv')}]$\int_{\Gamma_{j2}}|\tfrac{\psi(z)}{z-\mu_k}|\,\dd z<\infty$
  for $k=1,\ldots,r$
\item[\textrm{(v')}]$\int_{\Gamma_{j2}}|\tfrac{\psi(z)}{z-\mu_k}|\ee^{-\ell z}\,\dd
  z<\infty$ for $k=1,\ldots,r$
\item[\textrm{(vi')}]$\int_{\Gamma_{j2}}|\tfrac{\psi(z)}{z+\nu_d}|\,\dd z<\infty$
  for $d=1,\ldots,s$
\item[\textrm{(vii')}]$\psi(\gamma_{j2}(\delta_{j1}^2))\gamma_{j2}(\delta_{j1}^2)\ee^{-y\gamma_2(\delta_{j1}^2)}=\psi(\gamma_{j2}(\delta_{j2}^2))\gamma_{j2}(\delta_{j2}^2)\ee^{-y\gamma_{j2}(\delta_{j2}^2)}$\\
  for all $\ell\leq y<0$.
 \end{itemize}
for $j=1,\ldots,n$.
\end{Con}
With these definitions we can state
\begin{Th}\label{storogfin}
Assume that the integration contours $\Gamma_{i1}$, $i=1,\ldots,m$ and
$\Gamma_{j2}$, $j=1,\ldots,n$ satisfy the conditions in Condition
\ref{conditions}. Define $f:\RR\to\CC$ by
\begin{equation}\label{flangsigma}
f(y)=\sum_{i=1}^mc_if^1_{\Gamma_{i1}}(y)+\sum_{j=1}^nb_jf^2_{\Gamma_{j2}}(y)+f_0(y)\,.
\end{equation}
Then $f$ is bounded and differentiable on $\ell\to\infty$. If the
constants $c_1,\ldots,c_m$ and $b_1,\ldots,b_n$ fulfil the equations
\begin{equation}\label{linetsigma}
\sum_{j=1}^nb_j N^{3k}_{\Gamma_j}+\frac{1}{\mu_k+\zeta}=0
\end{equation}
and
\begin{equation}\label{lintosigma}
\Bigg(\sum_{i=1}^mc_iM^{1k}_{\Gamma_i}\Bigg)+\Bigg(\sum_{j=1}^nb_jN^{1k}_{\Gamma_j}\Bigg)=0
\end{equation}
for $k=1,\ldots,r$  together with
\begin{equation}\label{lintresigma}
\Bigg(\sum_{j=1}^nb_jN^{2d}_{\Gamma_j}\Bigg)-\Bigg(\sum_{i=1}^mc_iM^{2d}_{\Gamma_j}\Bigg)=0
\end{equation}
for $d=1,\ldots,s$, then $f$ is a partial eigenfunction for $\AAA$.
\end{Th}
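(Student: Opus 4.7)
The plan is to verify directly that $\AAA f(x)=0$ for every $x\in[\ell,\infty)$, using the linearity of $\AAA$ to handle the three types of building blocks $f^1_{\Gamma_{i1}}$, $f^2_{\Gamma_{j2}}$ and $f_0$ separately, and to split the computation into the two regimes $x\geq 0$ and $x\in[\ell,0)$. Boundedness and differentiability of $f$ on $[\ell,\infty)$ are immediate consequences of the integrability conditions in Condition~\ref{conditions} together with dominated convergence, so the substance of the proof is the eigenfunction equation.

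I would begin with the ``ideal'' calculation: if $\phi(y)=\int_{\Gamma}\psi(z)\ee^{-yz}\,\dd z$ were defined for all $y\in\RR$, then $\AAA\phi\equiv 0$. The drift term is handled by using $x\ee^{-xz}=-\partial_z\ee^{-xz}$ and integrating by parts in $z$; the resulting boundary contribution $[\psi(z)z\ee^{-xz}]_{\Gamma}$ vanishes exactly by conditions (v)/(vii'), which are tailored to this step. The remaining integral $-\kappa\int_{\Gamma}(\psi(z)+z\psi'(z))\ee^{-xz}\,\dd z$ is rewritten via the logarithmic derivative
\[
\frac{\psi'(z)}{\psi(z)}=-\frac{1}{z}-\sum_{k=1}^r\frac{p\lambda\alpha_k/\kappa}{z-\mu_k}-\sum_{d=1}^s\frac{q\lambda\beta_d/\kappa}{z+\nu_d}
\]
dictated by the product form~(\ref{psinul}), which converts it to $\lambda p\sum_k\alpha_k\int_{\Gamma}\tfrac{z\psi(z)}{z-\mu_k}\ee^{-xz}\dd z+\lambda q\sum_d\beta_d\int_{\Gamma}\tfrac{z\psi(z)}{z+\nu_d}\ee^{-xz}\dd z$. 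The jump integral $\lambda\int_{\RR}(\phi(x+y)-\phi(x))G(\dd y)$, after interchanging $y$- and $z$-integrations, evaluates $g_-$ and $g_+$ to factors $\mu_k/(\mu_k-z)$ and $\nu_d/(\nu_d+z)$; combined with $-\phi(x)$ via $\sum\alpha_k=\sum\beta_d=1$ and $p+q=1$ it produces exactly the negative of the drift contribution, so the two cancel.

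For the genuine piecewise building blocks, the same ideal argument shows that $\AAA f^1(x)$, $\AAA f^2(x)$ and $\AAA f_0(x)$ each reduce to the ``missing'' part of $\lambda\int\phi(x+y)G(\dd y)$ lost to the truncations at $y=0$ (for $f^1$), $y\in\{\ell,0\}$ (for $f^2$) or $y<\ell$ (for $f_0$). Evaluating the missing pieces with the explicit exponential densities produces pure exponentials in $x$ with prefactors $\ee^{-x\mu_k}$, $\ee^{\mu_k(\ell-x)}$ or $\ee^{x\nu_d}$ whose contour-integral coefficients are precisely the quantities $M^{1k}_{\Gamma_i}$, $M^{2d}_{\Gamma_i}$, $N^{1k}_{\Gamma_j}$, $N^{2d}_{\Gamma_j}$, $N^{3k}_{\Gamma_j}$ of Definition~\ref{definition}; $f_0$ additionally contributes the elementary factor $\mu_k/(\mu_k+\zeta)$ from integrating $\ee^{\zeta y}$ against $g_-$. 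After summing with weights $c_i,b_j$, the $\ee^{-x\mu_k}$ family arises only in the regime $x\geq 0$, the $\ee^{x\nu_d}$ family only in the regime $x\in[\ell,0)$, while the $\ee^{\mu_k(\ell-x)}$ family appears in both. Consequently $\AAA f(x)\equiv 0$ on $[\ell,\infty)$ is equivalent to the three independent linear systems~(\ref{lintosigma}), (\ref{lintresigma}) and~(\ref{linetsigma}).

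The main obstacle is this regime-dependent bookkeeping. The finite $y$-integration intervals produced by the truncations are not the same functions of $x$ in the two regimes (for instance $\int_{-x}^{0}g_-(y)\dd y$ collapses to zero when $x<0$ but is non-trivial when $x\geq 0$), so each case must be written out separately and one has to check that no hidden exponential prefactor appears beyond the three families above. The other routine but non-trivial point is the Fubini interchange uniformly in $x$, which is exactly what the absolute-integrability conditions (ii)--(iv) and (iii')--(vi') guarantee across the relevant ranges of $y$. Once the three coefficient families are identified with the linear systems, the hypotheses~(\ref{linetsigma})--(\ref{lintresigma}) force $\AAA f(x)=0$ and $f$ is a partial eigenfunction.
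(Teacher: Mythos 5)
Your proposal follows essentially the same route as the paper's: compute $\AAA$ applied to each of the three types of building block ($f^1_{\Gamma_{i1}}$, $f^2_{\Gamma_{j2}}$, $f_0$), observe on each of the two regimes $y\geq 0$ and $\ell\leq y<0$ that the result is a finite combination of exponentials $\ee^{-\mu_k y}$, $\ee^{\mu_k\ell}\ee^{-\mu_k y}$, $\ee^{\nu_d y}$ whose coefficients are precisely $M^{1k}_{\Gamma_i}$, $M^{2d}_{\Gamma_i}$, $N^{1k}_{\Gamma_j}$, $N^{2d}_{\Gamma_j}$, $N^{3k}_{\Gamma_j}$ and $\mu_k/(\mu_k+\zeta)$, and then conclude that the linear systems (\ref{linetsigma})--(\ref{lintresigma}) force the coefficients of each exponential family to cancel. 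The paper compresses the block-wise computation into a citation of the proof of Theorem~4 of \cite{art3}, while you unpack it through the ``ideal-plus-truncation'' decomposition and the logarithmic derivative of $\psi$; this is the same underlying mechanism, merely spelled out rather than cited, so the approaches coincide.
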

\begin{proof}
As in the proof of \cite[Theorem~4]{art3} it is seen that for $y\geq 0$
\[
\AAA f^1_{\Gamma_{i1}}=p\lambda \sum_{k=1}^r\alpha_k\mu_k\, M^{1k}_{\Gamma_i}\,\ee^{-\mu_k y}
\]
and for $\ell\leq y<0$
\[
\AAA f^1_{\Gamma_{i1}}=-q\lambda \sum_{d=1}^s\beta_d\nu_d\, M^{2d}_{\Gamma_i}\,\ee^{\nu_d y}
\]
Furthermore, we find for $y\geq 0$ that
\[
\AAA f^2_{\Gamma_{j2}}=p\lambda \sum_{k=1}^r\alpha_k\mu_k\, N^{1k}_{\Gamma_j}\,\ee^{-\mu_k y}+p\lambda \sum_{k=1}^r\alpha_k\mu_k \,N^{2k}_{\Gamma_j}\,\ee^{\mu_k \ell}\,\ee^{-\mu_k y}
\]
and finally, for $\ell\leq y<0$
\[
\AAA f^2_{\Gamma_{j2}}=q\lambda \sum_{d=1}^s\beta_d\nu_d\, N^{2d}_{\Gamma_j}\,\ee^{\nu_d y}+p\lambda \sum_{k=1}^r\alpha_k\mu_k \,N^{3k}_{\Gamma_j}\,\ee^{\mu_k\ell}\,\ee^{-\mu_k y}
\]
Since for all $y\geq\ell$
\[
\AAA f_0(y)=\lambda \sum_{k=1}^r\alpha_k\mu_k\frac{1}{\mu_k+\zeta}\,\ee^{\mu_k\ell}\,\ee^{-\mu_k y}\,,
\]
it follows that $\AAA f(y)=0$ for all $y\geq\ell$, if the equations (\ref{linetsigma})--(\ref{lintresigma}) are satisfied.
\end{proof}

\subsection{The choice of integration contours}\label{contours}
There are several possible choices for the integration contours, see  \cite{art3}. The choice described in the following applies to cases with
positive drift $\kappa$ and will differ from the ones
defined in \cite{art3}. The situation $\kappa<0$ is studied in Section \ref{undershoot}.

First assume that $\ell>0$. Then only one partial
eigenfunction is needed and we shall use Theorem
\ref{satningfin}. The definition of the $m=r$ contours has its starting point in the zeros and
singularities of the kernel $\psi$. The real--valued points $-\nu_s,\ldots,-\nu_1,0,\mu_1,\ldots,\mu_r$ from (\ref{fml:gdensity})
are all such zeros or singularities. The
contours $\Gamma_1,\ldots,\Gamma_{r}$ are chosen as follows 
\begin{itemize}
\item If $\mu_i$ is a zero for $\psi$ define
$$
\Gamma_i=\{\mu_i+(1+i)t:0\leq t<\infty\}\,. 
$$
\item If $\mu_i$ is a singularity for $\psi$ define
$$
\Gamma_i=\{\mu+(-1+i)t:-\infty<t\leq
0\}\cup\{\mu+(1+i)t:0\leq t<\infty\}
$$
for a $\mu\in(\mu_{i-1},\mu_i)$ (with the convention $\mu_0=0$).
\end{itemize}
A sketch of the chosen contours can be seen in Figure \ref{Figur1}.
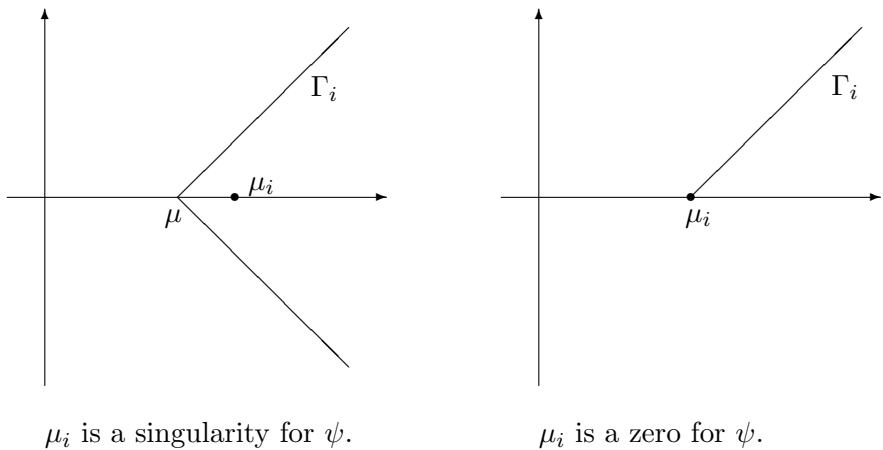
\begin{figure}
\setlength{\unitlength}{5mm}
\begin{center}
\begin{picture}(24,11)(-1,-6.5)
\put(-1,0){\vector(1,0){10}}
\put(0,-5){\vector(0,1){10}}
\put(5,0){\circle*{0.2}}
\put(3.5,0){\line(1,1){4.5}}
\put(3.5,0){\line(1,-1){4.5}}
\put(0,-6.5){$\mu_i$ is a singularity for $\psi$.}
\put(5.35,0.2){$\mu_i$}
\put(3.15,-0.65){$\mu$}
\put(7,2.68){$\Gamma_{i}$}

\put(12,0){\vector(1,0){10}}
\put(13,-5){\vector(0,1){10}}
\put(17,0){\circle*{0.2}}
\put(17,0){\line(1,1){4.5}}
\put(13,-6.5){$\mu_i$ is a zero for $\psi$.}
\put(16.85,-0.65){$\mu_i$}
\put(20.7,2.72){$\Gamma_{i}$}
\end{picture}
\caption{The contour $\Gamma_i$ in the two cases: $\mu_i$ is a
  singularity (left) for $\psi$ and $\mu_i$ is a zero (right)}
\label{Figur1}
\end{center} 
\end{figure}
Next assume that $\ell<0$. Then Theorem~\ref{storogfin} is
used. For the contours $\Gamma_{11},\ldots,\Gamma_{r1}$ one can use
$\Gamma_1,\ldots,\Gamma_r$ from above. It remains to find
$n=r+s+1$ contours $\Gamma_{12},\ldots,\Gamma_{r+s+1,2}$ in order to
construct two eigenfunctions. For convenience let $p_1,\ldots,p_{r+s+1}$ denote the points
$-\nu_s,\ldots,-\nu_1,0,\mu_1,\ldots,\mu_r$ and use the following recipe:
\begin{itemize}
\item If $p_i$ is a zero for $\psi$ define
$$
\Gamma_{i2}=\{p_i+(-1+i)t:0\leq t<\infty\} 
$$
\item If $p_i$ is a singularity for $\psi$ define
$$
\Gamma_{i2}=\{p+(1+i)t:-\infty<t\leq
0\}\cup\{p+(-1+i)t:0\leq t<\infty\} 
$$
for a $p\in(p_{i};p_{i+1})$ (with the convention $p_{r+s+2}=\infty$).
\end{itemize}
\begin{Rem}\label{kompleksversion}
For the contours $\Gamma_{i}$ corresponding to a singularity the specific choice of $\mu$ in $(\mu_{i-1},\mu_i)$ is
without influence as a result of Cauchy's Theorem. In fact, $\mu$ can
be chosen freely in $(\mu_l,\mu_i)$ where $\mu_l$ is the largest
singularity for $\psi$ less than $\mu_i$ (remember that 0 is a
singularity so that $\mu_l\geq 0$). Moreover, it can never happen that
$f_{\Gamma_i}=f_{\Gamma_{i+1}}$ in the case where both $\mu_i$ and
  $\mu_{i+1}$ are singularities. If $\mu_i$, the singularity that
  separates the two contours, is of order $\rho<0$ with $\rho\notin
  \ZZ$ this is secured from the use of different versions of the
  complex logarithm in the respective domains of the contours. If the singularity $\mu_i$ is an
  integer the argument that $f_{\Gamma_i}\neq f_{\Gamma_{i+1}}$ is
  based on Cauchy's Theorem.
\end{Rem} 

\section{Asymptotics of the ruin probability as $x\to\infty$}\label{xasympt}
When the drift $\kappa>0$ then $\PPx(\tau<\infty)<1$. Furthermore, the probability decreases when the initial value $x$
increases. Solving the equation system (\ref{equations})
w.r.t. $\PPx(\tau<\infty)=\PPx(A_c)+\PPx(A_j)$ we have for $\ell<0$
\begin{equation}\label{ruinssh}
\PPx(\tau<\infty)=f_1(x)\frac{1-f_2(\ell)}{f_1(\ell)-f_2(\ell)}+f_2(x)\frac{f_1(\ell)-1}{f_1(\ell)-f_2(\ell)}\,,
\end{equation}
where $f_1$ and $f_2$ are the two partial eigenfunctions
constructed in Theorem \ref{storogfin}. When $\ell>0$ we have
$$
\EEx[A_j]=f(x)\,,
$$
where $f$ is the single eigenfunction constructed in Theorem~\ref{satningfin}.
It is essential that the construction of the partial eigenfunctions
$f_1$ and $f_2$ (or $f$ in the $\ell>0$ case) does not depend on $x$. The
behaviour of the probability $\PPx(\tau <\infty)$ to be studied is
therefore only determined by the behaviour of the two partial
eigenfunctions $f_1$ and $f_2$ when $x\to\infty$. We have the
following result:
\begin{Th}\label{voksendex}
There exists a constant $K$ such that
$$
\lim_{x\to\infty}\frac{\PPx(\tau<\infty)}{\ee^{-\mu_1
    x}x^{-\frac{p\alpha_1\lambda}{\kappa}-1}}=K\,.
$$
The constant $K$ is expressed explicitly in (\ref{konstanten}) below
when $\ell<0$ and in (\ref{konstanten2}) when $\ell>0$.
\end{Th}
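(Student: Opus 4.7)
First I would start from the representation (\ref{ruinssh}) (for $\ell<0$) or $\EEx[A_j]=f(x)$ (for $\ell>0$) and substitute the explicit form (\ref{flangsigma}) of the partial eigenfunctions. Because $f_0(x)=0$ for $x\ge \ell$ and $f^2_{\Gamma_{j2}}(x)=0$ for $x>0$, both $f_1(x)$ and $f_2(x)$ reduce for large $x>0$ to linear combinations of the contour integrals $\int_{\Gamma_i}\psi(z)\ee^{-xz}\,\dd z$, $i=1,\dots,r$, with $x$-independent coefficients obtained from the linear systems (\ref{linetsigma})--(\ref{lintresigma}) (respectively (\ref{ligningssystemet})). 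I would therefore write
\[
 \PPx(\tau<\infty) \,=\, \sum_{i=1}^{r}\gamma_i(\ell)\int_{\Gamma_i}\psi(z)\,\ee^{-xz}\,\dd z,
\]
where the dependence on $x$ is now concentrated in the exponential kernel, reducing the theorem to the large-$x$ asymptotics of these integrals.

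\textbf{Hankel asymptotics and selection of the dominant contour.} Next I would deform each $\Gamma_i$ by Cauchy's theorem into a tight Hankel-type loop around $\mu_i$ joined to two rays receding to $(+\infty,\pm\infty)$; the rays and the closing arc at infinity contribute exponentially less because $|\ee^{-xz}|=\ee^{-x\,\mathrm{Re}\, z}$. Locally at $\mu_i$ I would write $\psi(z)=(z-\mu_i)^{-p\alpha_i\lambda/\kappa}\,\tilde\psi_i(z)$ with $\tilde\psi_i$ holomorphic and non-vanishing at $\mu_i$, perform the substitution $u=x(z-\mu_i)$, and invoke Hankel's representation of $1/\Gamma$ to obtain
\[
 \int_{\Gamma_i}\psi(z)\,\ee^{-xz}\,\dd z \,=\, A_i\,\ee^{-\mu_i x}\,x^{-p\alpha_i\lambda/\kappa-1}\bigl(1+\oh(1)\bigr),
\]
where $A_i$ is an explicit constant proportional to $\tilde\psi_i(\mu_i)$ together with the Hankel/Gamma factor. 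Since $\mu_1<\mu_2<\cdots<\mu_r$, only the $i=1$ term survives at leading order, giving
\[
 \PPx(\tau<\infty) \,=\, \gamma_1(\ell)\,A_1\,\ee^{-\mu_1 x}\,x^{-p\alpha_1\lambda/\kappa-1}\bigl(1+\oh(1)\bigr),
\]
so that $K=\gamma_1(\ell)\,A_1$. The formulas (\ref{konstanten}) and (\ref{konstanten2}) would then be read off by applying Cramer's rule to the corresponding linear system to express $\gamma_1(\ell)$ explicitly in terms of the quantities $M^{1k}_{\Gamma_i},M^{2d}_{\Gamma_i},N^{\cdot\,k}_{\Gamma_j}$ of Definition \ref{definition} and the weights $\tfrac{1-f_2(\ell)}{f_1(\ell)-f_2(\ell)}$, $\tfrac{f_1(\ell)-1}{f_1(\ell)-f_2(\ell)}$.

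\textbf{Main obstacle.} The principal difficulty I anticipate is verifying that the overall coefficient $\gamma_1(\ell)$ in front of the dominant contour integral does not vanish, so that $K\neq 0$ and the limit is genuinely of the stated order. For $\ell>0$ this reduces to checking the non-degeneracy of the $r\times r$ matrix $(M^{k}_{\Gamma_i})$ from (\ref{ligningssystemet}) together with a minor condition; for $\ell<0$ it reduces to the analogous non-degeneracy of the block system (\ref{linetsigma})--(\ref{lintresigma}) combined with the $\ell$-dependent weights in (\ref{ruinssh}). Both hinge on the linear independence of the contour contributions guaranteed by the branch-selection argument of Remark \ref{kompleksversion}. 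A secondary technical point is the careful bookkeeping of branch cuts and integration directions in the Hankel substitution, since $\psi$ is a product of (generally non-integer) complex powers and the Hankel formula is sensitive to the precise orientation of the deformed contour.
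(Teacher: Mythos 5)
Your proposal is essentially the paper's own proof: the paper likewise observes that $f^2_{\Gamma_{j2}}(x)=0$ for $x>0$ and reduces $\PPx(\tau<\infty)$ to a linear combination of the $f^1_{\Gamma_{i1}}(x)$ with $x$-independent coefficients, then (Lemmas~\ref{hjaelpeet}--\ref{hjaelpeto}) establishes via the substitution $s=tx$ (together with a vertex shift $\mu\to\mu_j-a/x$ justified by Cauchy's theorem, which is exactly your Hankel-loop deformation) and dominated convergence that $f^1_{\Gamma_{j1}}(x)\sim \psi_{\setminus\{\mu_j\}}(\mu_j)\bigl(\int z^{\rho_j}\ee^{-z}\,\dd z\bigr)\,\ee^{-\mu_j x}x^{\rho_j-1}$, and finally retains the dominant $\mu_1$-term. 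The only cosmetic difference is that you would evaluate the limiting contour integral via Hankel's representation of $1/\Gamma$, while the paper leaves it unevaluated as $\int_{\Gamma_{-a}}z^{\rho_1}\ee^{-z}\,\dd z$ in (\ref{konstanten}) and (\ref{konstanten2}).
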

For the later use of the results it is convenient to formulate part of
the proof of Theorem \ref{voksendex} as self--contained lemmas. Furthermore, the definitions $\rho_j=-p\alpha_j\lambda/\kappa$ and
$$
\psi_{\setminus\{\mu_{j}\}}(z)=z^{-1}\left(\prod_{k=1,k\neq j}^r(z-\mu_k)^{-\frac{p\alpha_k\lambda}{\kappa}}\right)\left(\prod_{d=1}^s(z+\nu_d)^{-\frac{q\beta_d\lambda}{\kappa}}\right)\,.
$$
for $j=1,\ldots,r$ will be convenient. Now $f^1_{\Gamma_{j1}}$ can be written as
$$
f^1_{\Gamma_{j1}}(x)=\int_{\Gamma}(z-\mu_{j})^{\rho_j}\psi_{\setminus\{\mu_j\}}(z)\ee^{-xz}\,\dd z\,.
$$ 
The first lemma concerns the case, where $\alpha_j<0$. Here $\mu_j$ is a zero for $\psi$, and $\Gamma_{j1}=\{\mu_j+(1+i)t\,:\,0\leq t<\infty\}$. We find
\begin{Lemma}\label{hjaelpeet} 
Assume $\alpha_j<0$. Then it holds that
\begin{equation}\label{graense1}
\lim_{x\to\infty}\frac{f^1_{\Gamma_{j1}}(x)}{\ee^{-\mu_{j} x}x^{\rho-1}}=\psi_{\setminus\{\mu_j\}}(\mu_{j})\int_{\Gamma_0}z^{\rho_j}\ee^{-z}\,\dd z\,,
\end{equation}
where $\Gamma_0$ is the integration contour
\begin{equation}\label{gammanul}
\Gamma_0= \{(1+i)t:0\leq t<\infty\}\,.
\end{equation}
\end{Lemma}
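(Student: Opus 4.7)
The plan is to reduce the statement to an elementary dominated--convergence argument via two changes of variable which isolate, in turn, the exponential factor and the polynomial factor.

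First I would factor $\psi(z)=(z-\mu_j)^{\rho_j}\psi_{\setminus\{\mu_j\}}(z)$ and translate the contour by $z=\mu_j+w$. Because $\Gamma_{j1}=\mu_j+\Gamma_0$, this gives
\[
f^1_{\Gamma_{j1}}(x)=\ee^{-\mu_j x}\int_{\Gamma_0}w^{\rho_j}\,\psi_{\setminus\{\mu_j\}}(\mu_j+w)\,\ee^{-xw}\,\dd w,
\]
which already carries the exponential weight $\ee^{-\mu_j x}$. A second rescaling $u=xw$ leaves $\Gamma_0$ invariant (it is a ray from the origin and $x>0$) and produces
\[
f^1_{\Gamma_{j1}}(x)=\ee^{-\mu_j x}\,x^{-\rho_j-1}\int_{\Gamma_0}u^{\rho_j}\,\psi_{\setminus\{\mu_j\}}(\mu_j+u/x)\,\ee^{-u}\,\dd u.
\]
It then suffices to show that the last integral tends to $\psi_{\setminus\{\mu_j\}}(\mu_j)\int_{\Gamma_0}z^{\rho_j}\ee^{-z}\,\dd z$ as $x\to\infty$.

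Pointwise convergence of the integrand is immediate: the hypothesis $\alpha_j<0$ makes $\rho_j>0$, so $\mu_j$ is a zero rather than a singularity of $\psi$, and $\psi_{\setminus\{\mu_j\}}$ is holomorphic (in particular continuous) at $\mu_j$. Hence $\psi_{\setminus\{\mu_j\}}(\mu_j+u/x)\to\psi_{\setminus\{\mu_j\}}(\mu_j)$ for each fixed $u\in\Gamma_0$.

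The main obstacle is providing a dominating function that is uniform in $x$. The key observation is that, as $(u,x)$ ranges over $\Gamma_0\times[1,\infty)$, the argument $\mu_j+u/x$ stays on the ray $\Gamma_{j1}$ itself; on this ray $\psi_{\setminus\{\mu_j\}}$ is continuous, and by \refs{order} combined with the factorisation one has $|\psi_{\setminus\{\mu_j\}}(z)|=\Oh(|z|^{-1-\lambda/\kappa-\rho_j})$ as $|z|\to\infty$. Consequently $|\psi_{\setminus\{\mu_j\}}|\leq M$ on $\Gamma_{j1}$ for some constant $M$. Parametrising $u=(1+\ih)s$, $s\geq 0$, we have $|u|=\sqrt{2}\,s$ and $|\ee^{-u}|=\ee^{-s}$, so the integrand is dominated by $M\,2^{\rho_j/2}\,s^{\rho_j}\ee^{-s}$, which is integrable on $\Gamma_0$ since $\rho_j>0$. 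Dominated convergence then yields the limit \refs{graense1}.
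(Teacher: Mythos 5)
Your proof is correct and follows essentially the same route as the paper: factor out $(z-\mu_j)^{\rho_j}$, translate and rescale the contour to pull out $\ee^{-\mu_j x}x^{-\rho_j-1}$, bound $|\psi_{\setminus\{\mu_j\}}|$ uniformly along $\Gamma_{j1}$ using continuity together with the decay estimate \refs{order}, and conclude by dominated convergence. The only difference is presentational (two explicit changes of variable rather than a single parametrisation followed by the substitution $s=tx$), and the dominating function you exhibit is the same up to the immaterial constant $\sqrt2$ from the Jacobian.
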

\begin{proof}
The expression of $f^1_{\Gamma_{j1}}(x)$ can be rewritten in the following
way
\begin{align}\label{integral}
f^1_{\Gamma_{j1}}(x)&=\int_{\Gamma_{j1}}(z-\mu_{j})^{\rho_j}\psi_{\setminus\{\mu_j\}}(z)\ee^{-xz}\,\dd z\nonumber\\
&=\int_0^{\infty}(1+i)\big((1+i)t\big)^{\rho_j}\psi_{\setminus\{\mu_j\}}\big(\mu_{j}+(1+i)t\big)\ee^{-x(\mu_{j}+(1+i)t)}\,\dd
t\nonumber\\
&=x^{-{\rho_j}-1}\ee^{-\mu_{j}
  x}\int_0^{\infty}(1+i)\big((1+i)s\big)^{\rho_j}\psi_{\setminus\{\mu_j\}}\big(\mu_{j}+(1+i)\tfrac{s}{x}\big)\ee^{-s(1+i)}\,\dd s\,,
\end{align}
where the substitution $s=tx$ has been used. Consider the function $t\mapsto |\psi_{\setminus\{\mu_j\}}\big(\mu_{j}+(1+i)t\big)|$, which is continuous and strictly positive. Furthermore it is $O(|\mu_{j}+(1+2i)t|^{-1-\lambda/\kappa-\rho_j})$, when
$t\to\infty$. This gives the existence of a constant $C<\infty$ such
that
$$
|\psi_{\setminus\{\mu_j\}}\big(\mu_{j}+(1+i)t\big)|\leq C\quad \textrm{for all }t\geq 0\,.
$$
In particular, this holds when $t=s/x$ for all $s\geq 0$ and
$x>0$. Thus, the function
$$
s\mapsto C|(1+i)((1+2i)s)^{\rho_j}|\ee^{-s}
$$
is an integrable upper bound for the integrand in the last line
of (\ref{integral}). By dominated convergence we get that
\begin{align*}
&\phantom{=}\lim_{x\to\infty}
\int_0^{\infty}(1+i)\big((1+i)s\big)^{\rho_j}\psi_{\setminus\{\mu_j\}}\big(\mu_{j}+(1+i)\tfrac{s}{x}\big)\ee^{-s(1+i)}\,\dd
s\\
&=\int_0^{\infty}(1+i)\big((1+i)s\big)^{\rho_j}\psi_{\setminus\{\mu_j\}}(\mu_{j})\ee^{-s(1+i))}\,\dd
s\\
&=\psi_{\setminus\{\mu_j\}}(\mu_{j})\int_{\Gamma_0}z^{\rho_j}\ee^{-z}\,\dd z\,.
\end{align*}
Hence the result is shown.
\end{proof}
For the proof of the next lemma we define
$$
\Gamma_{\mu}=\{\mu+(-1+i)t:-\infty<t\leq 0\}\cup\{\mu+(1+i)t:0<t<\infty\}\,,
$$
for $\mu>0$. Note that if $\alpha_j>0$, then $\mu_j$ is a singularity for $\psi$ and $\Gamma_{j1}=\Gamma_\mu$, where $\mu\in(\mu_{j-1},\mu_j)$. We have
\begin{Lemma}\label{hjaelpeto}
Assume that $\alpha_j>0$. Then
\begin{equation}\label{reshjaelpeto}
\lim_{x\to\infty}\frac{f^1_{\Gamma_{j1}}(x)}{x^{\rho_j-1}e^{-\mu_j
    x}}=\psi_{\setminus\{\mu_j\}}(\mu_j)\int_{\Gamma_{-a}}z^{\rho_j}\ee^{-z}\,\dd z\,,
\end{equation}
where
$$
\Gamma_{-a} = \{-a+(-1+i)t:-\infty<t\leq 0\}\cup\{-a+(1+i)t:0<t<\infty\}
$$
and $a>0$ is any positive real number.
\end{Lemma}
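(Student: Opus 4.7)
The plan is to adapt the proof of Lemma~\ref{hjaelpeet}, inserting a preliminary contour deformation to cope with the singularity of $\psi$ at $\mu_j$. Since $\alpha_j>0$, the contour $\Gamma_{j1}=\Gamma_\mu$ has its vertex at some $\mu\in(\mu_{j-1},\mu_j)$, and Remark~\ref{kompleksversion} permits this vertex to be chosen freely in that open interval. For any fixed $a>0$ and all $x$ large enough that $a/x<\mu_j-\mu_{j-1}$, I would select $\mu=\mu_j-a/x$. Cauchy's theorem -- together with the decay of $|\e^{-xz}|$ on the distant parts of the two contours -- then gives
\begin{equation*}
f^1_{\Gamma_{j1}}(x)=\int_{\Gamma_{\mu_j-a/x}}(z-\mu_j)^{\rho_j}\psi_{\setminus\{\mu_j\}}(z)\e^{-xz}\,\dd z.
\end{equation*}

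Next I perform the substitution $w=x(z-\mu_j)$. The vertex $z=\mu_j-a/x$ maps to $w=-a$ and, under the reparametrisation $s=xt$, the two rays of $\Gamma_{\mu_j-a/x}$ map precisely to the two rays of $\Gamma_{-a}$. Using $\e^{-xz}=\e^{-\mu_jx}\e^{-w}$, $\dd z=\dd w/x$ and $(z-\mu_j)^{\rho_j}=w^{\rho_j}x^{-\rho_j}$ (branches inherited from the holomorphic version of $\psi$ fixed in the definition of $f^1_{\Gamma_{j1}}$), the integral is recast as
\begin{equation*}
f^1_{\Gamma_{j1}}(x)=x^{-\rho_j-1}\e^{-\mu_jx}\int_{\Gamma_{-a}}w^{\rho_j}\psi_{\setminus\{\mu_j\}}(\mu_j+w/x)\e^{-w}\,\dd w.
\end{equation*}
Dividing by the scaling prefactor reduces the claim to showing that the remaining integral converges to $\psi_{\setminus\{\mu_j\}}(\mu_j)\int_{\Gamma_{-a}}w^{\rho_j}\e^{-w}\,\dd w$ as $x\to\infty$.

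The limit is obtained by dominated convergence. Pointwise on $\Gamma_{-a}$, $\mu_j+w/x\to\mu_j$, so by continuity of $\psi_{\setminus\{\mu_j\}}$ at $\mu_j$ the integrand converges to $w^{\rho_j}\psi_{\setminus\{\mu_j\}}(\mu_j)\e^{-w}$. For a uniform majorant I would split $\Gamma_{-a}$ into a bounded portion, where $\mu_j+w/x$ lies (for $x\geq x_0$) in a fixed compact neighbourhood of $\mu_j$ on which $\psi_{\setminus\{\mu_j\}}$ is uniformly bounded, and the complementary unbounded portion, where the growth estimate $|\psi_{\setminus\{\mu_j\}}(z)|=O(|z|^{-1-\lambda/\kappa-\rho_j})$ from~(\ref{order}) together with $|\e^{-w}|=\e^{-\mathrm{Re}(w)}$ supplies an $x$-uniform integrable dominant. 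The main obstacle I anticipate is precisely this last step: on the unbounded part of $\Gamma_{-a}$ the point $\mu_j+w/x$ may remain near $\mu_j$ (when $|w|\ll x$) or escape to infinity (when $|w|\gtrsim x$), and distinct estimates must be produced and then glued together in each regime before dominated convergence can be applied.
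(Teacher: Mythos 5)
Your proof takes the same route as the paper: move the vertex of $\Gamma_{j1}$ to $\mu_j-a/x$ using Remark~\ref{kompleksversion}, scale by $x$ (the paper performs $s=tx$ separately on each ray, which is precisely your single substitution $w=x(z-\mu_j)$), and pass to the limit by dominated convergence. The "main obstacle" you flag is in fact handled cheaply once you observe that $|w|$ is bounded away from $0$ on $\Gamma_{-a}$ so $|w^{\rho_j}|$ is bounded, that $\psi_{\setminus\{\mu_j\}}$ is continuous and by~(\ref{order}) of at most polynomial growth along the shifted rays, and that $|\e^{-w}|=\e^{a-s}$ kills any such polynomial — giving an $x$-independent integrable majorant in a single stroke rather than by gluing two regimes.
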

\begin{proof}
In Remark \ref{kompleksversion} it was argued that
$$
f^1_{\Gamma_{j1}}(x)=f^1_{\Gamma_{\mu'}}(x)
$$
for all $\mu'\in(\mu_{l},\mu_j)$, where $\mu_l$ is the largest singularity for $\psi$ less than $\mu_j$. We choose $\mu'=\mu_j-\tfrac{a}{x}$ for some suitable
$a>0$. Hence,
\begin{align*}
&f^1_{\Gamma_{j1}}(x)\\
&= f^1_{\Gamma_{\mu_j-a/x}}(x)\nonumber\\
&=\int_0^{\infty}(1+i)\left(-\tfrac{a}{x}+(1+i)t\right)^{\rho}\psi_{\setminus\{\mu_j\}}\big(\mu_j-\tfrac{a}{x}+(1+i)t\big)\ee^{-x\mu_j+a-x(1+i)t}\,\dd
t\\
&+\int_{-\infty}^0(-1+i)\left(-\tfrac{a}{x}+(-1+i)t\right)^{\rho}\psi_{\setminus\{\mu_j\}}\big(\mu_j-\tfrac{a}{x}+(-1+i)t\big)\ee^{-x\mu_j+a-x(-1+i)t}\,\dd
t\,.
\end{align*}
Using the substitution $s=tx$ yields that the first
integral equals
\begin{equation}\label{nytigen}
x^{\rho-1}\ee^{-\mu_j
  x}\int_0^{\infty}(1+2i)\big((1+i)s-a\big)^{\rho_j}\psi_{\setminus\{\mu_j\}}\big(\mu_j-\tfrac{a}{x}+(1+i)\tfrac{s}{x}\big)\ee^{a-(1+i)s}\,\dd s\,.
\end{equation}
From dominated convergence the limit of the
integral in (\ref{nytigen}) as $x\to\infty$ is
$$
\psi_{\setminus\{\mu_j\}}(\mu_j)\int_0^{\infty}\big((1+i)s-a\big)^{\rho_j}\ee^{-(1+i)s}\,\dd s\,.
$$
A similar result holds for the second integral. Hence, it has been
shown that
\begin{align}\label{graense2}
&\lim_{x\to\infty}\frac{f^1_{\Gamma_{j1}}(x)}{x^{\rho_j-1}e^{-\mu_j
    x}}\nonumber\\
&=\psi_{\setminus\{\mu_j\}}(\mu_j)\int_0^{\infty}(1+i)\big(-a+(1+i)s\big)^{\rho_j}\ee^{-(-a+(1+i)s)}\,\dd
s\nonumber\\
&\phantom{=}+\psi_{\setminus\{\mu_j\}}(\mu_j)\int_{-\infty}^{0}(-1+i)\big(-a+(-1+i)s\big)^{\rho_j}\ee^{-(-a+(-1+i)s)}\,\dd
s\nonumber\\
&=\psi_{\setminus\{\mu_j\}}(\mu_j)\int_{\Gamma_{-a}}z^{\rho_j}\ee^{-z}\,\dd
z\,.
\end{align}
\end{proof}
\begin{Rem}
The starting point of the contour, $\mu'$, was set to move right
towards $\mu_j$. Another solution could be letting it move left
towards $\mu_{l}$ (the largest singularity less than $\mu_j$) with the definition $\mu'=\mu_l+\tfrac{a}{x}$. From
redoing all the arguments the following result would be reached:
$$
\lim_{x\to\infty}\frac{f^1_{\Gamma_{j1}}(x)}{x^{\rho_l-1}\ee^{-\mu_{l}
    x}}=\phi(\mu_{l})\pi(\mu_{l})\int_{\Gamma_{a}}z^{\rho_l}\ee^{-z}\,\dd z
$$
what appears to be a slower decrease towards 0. However, note that only one of the integrals is different from 0:
$$
\int_{\Gamma_{a}}z^{-\rho_l}\ee^{-z}\,\dd z=0 \quad \textrm{and}\quad\int_{\Gamma_{-a}}z^{-\rho_j}\ee^{-z}\,\dd z\neq 0\,.
$$
\end{Rem}
\begin{proof}[Proof of Theorem \ref{voksendex}] 
Assume $\ell<0$ (if $\ell>0$ the calculations will be simpler). Both $f_1$ and $f_2$ are linear combinations of the
$f_{\Gamma}$ functions. Since $x$ is assumed to be positive all
$f^2_{\Gamma_{j2}}(x)=0$. Then $f_1(x)$ and $f_2(x)$ are linear combinations of
$$
f^1_{\Gamma_{11}}(x),\ldots, f^1_{\Gamma_{m1}}(x)\,.
$$
So in order to study $\PPx(\tau<\infty)$ it is sufficient to determine the behaviour
of the functions $f^1_{\Gamma_{i1}}(x)$, when $x\to\infty$. For each each $i=1,\ldots,r$ there are two possible situations to
consider: $\alpha_i<0$ or $\alpha_i>0$. It was shown in
Lemma~\ref{hjaelpeet} and Lemma~\ref{hjaelpeto} that either way 
$$
\lim_{x\to\infty}\frac{f^1_{\Gamma_{i1}}(x)}{x^{\rho_i-1}e^{-\mu_i
    x}}=K_i
$$
for some constant $K_i$. Since the ruin probability $\PPx(\tau<\infty)$ can be written as a
linear combination of these functions, the asymptotics are determined by the function with the slowest
decrease. This is $f^1_{\Gamma_{11}}$, and since $\mu_1$ is always a singularity for $\psi$,
the exact asymptotic behaviour of $f^1_{\Gamma_{11}}$ can be found in Lemma
\ref{hjaelpeto}. 

Let the two partial eigenfunctions $f_1$ and $f_2$ be the linear
combinations
\begin{equation}\label{linearkomb}
f_1(x)=\sum_{i=1}^{r}c_i^1f^1_{\Gamma_{1i}}(x) \quad
\textrm{and}\quad f_2(x)=\sum_{i=1}^{r}c_i^2f^1_{\Gamma_{1i}}(x)
\end{equation}   
for $x>0$. Then 
\begin{align*}
&\lim_{x\to\infty}\frac{\PPx(\tau<\infty)}{\ee^{-\mu_1
    x}x^{-\frac{p\alpha_l\lambda}{\kappa}-1}}\\
&=
\lim_{x\to\infty}\frac{f^1_{\Gamma_{\mu_1}}(x)}{\ee^{-p_{\mu_1} x}x^{-\frac{p\alpha_l\lambda}{\kappa}-1}}\left(c_{1}^1\frac{1-f_2(\ell)}{f_1(\ell)-f_2(\ell)}+c_{1}^2\frac{f_1(\ell)-1}{f_1(\ell)-f_2(\ell)}\right)=K\,,
\end{align*}
where $K$ is given by
\begin{align}\label{konstanten}
K=&\left(\psi_{\setminus\{\mu_{1}\}}(\mu_1)\int_{\Gamma_{-a}}z^{\frac{p\alpha_l\lambda}{\kappa}}\ee^{-z}\,\dd
  z\right)\times\nonumber\\
&\left(c_{1}^1\frac{1-f_2(\ell)}{f_1(\ell)-f_2(\ell)}+c_{1}^2\frac{f_1(\ell)-1}{f_1(\ell)-f_2(\ell)}\right)\,.
\end{align}
Hence, the theorem is proved for $\ell<0$. With the same arguments for $\ell>0$ we derive
\begin{equation}\label{konstanten2}
K=c_1\left(\psi_{\setminus\{\mu_{1}\}}(\mu_1)\int_{\Gamma_{-a}}z^{\frac{p\alpha_l\lambda}{\kappa}}\ee^{-z}\,\dd
  z\right)\,.
\end{equation}
\end{proof}

\section{Asymptotics as $\ell\to-\infty$}\label{lasympt}
The setup for $\ell\to-\infty$
becomes more complicated, since the constants $c_1,\ldots,c_m$ and
$b_1,\ldots,b_n$ in the construction of the partial eigenfunctions
depend on $\ell$.

\subsection{Asymptotics of the ruin probability, positive drift}

To study $\PPx(\tau(\ell)<\infty)$ given by
(\ref{ruinssh}) both $f_i(x)$ and $f_i(\ell)$, $i=1,2$, are needed. For
$x>0$, $\ell<0$ and $i=1$ the expressions are
\begin{eqnarray*}
f_1(\ell)&=&\sum_{j=-s}^{r-1}b_j(\ell)f_{\Gamma_{j,2}}^2(\ell)\\
f_1(x)&=&\sum_{i=1}^rc_i(\ell)f^1_{\Gamma_{i,1}}(x)\,.
\end{eqnarray*}
This definition excludes the last of the integration contours
$\Gamma_{-s,2},\ldots,\Gamma_{r,2}$. Similarly, $f_2(\ell)$ and
$f_2(x)$ are defined by
\begin{eqnarray*}
f_2(\ell)&=&\sum_{j=-s+1}^{r}\tilde{b}_j(\ell)f_{\Gamma_{j,2}}^2(\ell)\\
f_2(x)&=&\sum_{i=1}^r\tilde{c}_i(\ell)f^1_{\Gamma_{i,1}}(x)\,,
\end{eqnarray*}
excluding the first of the contours $\Gamma_{-s,2},\ldots,\Gamma_{r,2}$. The constants $c_1(\ell),\ldots,c_{r}(\ell)$ and $b_{-s}(\ell),\ldots,b_{r-1}(\ell)$ are found as the solution to a
linear equation:
\begin{equation}\label{matrixligning}
\left[\begin{array}{cccccc}
0 & \ldots & 0 & N^{31}_{\Gamma_{-s}}(\ell) & \ldots & N^{31}_{\Gamma_{r-1}}(\ell)\\
\vdots  & \ddots & \vdots & \vdots & \ddots &\vdots \\
0 & \ldots & 0 & N^{3r}_{\Gamma_{-s}}(\ell) & \ldots &
N^{3r}_{\Gamma_{r-1}}(\ell)\\
M^{11}_{\Gamma_1} & \ldots & M^{11}_{\Gamma_r} & N^{11}_{\Gamma_{-s}} & \ldots &
N^{11}_{\Gamma_{r-1}}\\
\vdots  & \ddots & \vdots & \vdots & \ddots &\vdots \\
M^{1r}_{\Gamma_1} & \ldots & M^{1r}_{\Gamma_r} & N^{1r}_{\Gamma_{-s}} & \ldots &
N^{1r}_{\Gamma_{r-1}}\\
-M^{21}_{\Gamma_1} & \ldots & -M^{21}_{\Gamma_r} & N^{21}_{\Gamma_{-s}} & \ldots &
N^{21}_{\Gamma_{r-1}}\\
\vdots  & \ddots & \vdots & \vdots & \ddots &\vdots \\
-M^{2s}_{\Gamma_1} & \ldots & -M^{2s}_{\Gamma_r} & N^{2s}_{\Gamma_{-s}} & \ldots &
N^{2s}_{\Gamma_{r-1}}
\end{array}\right]\left[\begin{array}{c}
c_1(\ell)\\
\vdots\\
c_r(\ell)\\
b_{-s}(\ell)\\
\vdots\\
b_{r-1}(\ell)
\end{array}\right]=\left[\begin{array}{c}
\frac{1}{\mu_1}\\
\vdots\\
\frac{1}{\mu_r}\\
0\\
\vdots\\
0\end{array}\right] \,,
\end{equation}
where we denote the first matrix by $A(\ell)$. The limit of $\PPx(\tau(\ell)<\infty)$ when $\ell \to-\infty$ can then be derived.
\begin{Th}\label{thm:limitresult}
The limits $c_i=\lim_{\ell\to-\infty}c_i(\ell)$ are well defined and
non--zero for $i=1,\ldots,r$, and
\begin{equation}\label{fml:limitresult}
\lim_{\ell \to-\infty}\PPx(\tau(\ell)<\infty)=-\sum_{i=1}^rc_if^1_{\Gamma_{i,1}}(x)\,.
\end{equation}
The $c_i$ constants are found in the Corollary \ref{korollaret} below.
\end{Th}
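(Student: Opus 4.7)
The plan is to combine the $\ell$-dependent linear system (\ref{matrixligning}) defining $(c_i(\ell),b_j(\ell))$ with a convenient rewriting of (\ref{ruinssh}), and pass to the limit by analysing the only $\ell$-dependent entries of the coefficient matrix $A(\ell)$, namely the $N^{3k}_{\Gamma_j}(\ell)$.

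First I would determine the behaviour of each
\[
N^{3k}_{\Gamma_j}(\ell)=\int_{\Gamma_{j,2}}\frac{\psi(z)}{z-\mu_k}\,\ee^{-\ell z}\,\dd z
\]
as $\ell\to-\infty$. Since the only $\ell$-dependence sits in $\ee^{-\ell z}$, the behaviour is governed by $\mathrm{Re}\, z$ along $\Gamma_{j,2}$. When $\Gamma_{j,2}$ lies in $\{\mathrm{Re}\, z<0\}$ (the contours associated with points $p_j<0$), the integrand is dominated by $|\psi(z)|\ee^{-\ell p}$ with $p<0$, so the integral vanishes by dominated convergence. For the contour through $0$---always a singularity of $\psi$ due to the $z^{-1}$ factor---Cauchy's theorem lets me swing the contour across the origin, leaving a residue at $0$ and an integral over a left-half-plane contour that vanishes in the limit. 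For the remaining contours, which cross the positive real axis at or near some $\mu_j>0$, I would deform leftward past the intermediate singularities and zeros of $\psi$, producing residues and wrapped branch-cut integrals whose leading behaviour as $\ell\to-\infty$ can be computed explicitly.

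Second, I would substitute these asymptotics into (\ref{matrixligning}). After renormalising the $b_j(\ell)$ whose partners $N^{3k}_{\Gamma_j}(\ell)$ grow (so that all entries stay bounded), one writes $A(\ell)=A_\infty+E(\ell)$ with $E(\ell)\to 0$, and the system therefore has a well-defined limit. Non-singularity of $A_\infty$---which is what makes the limits $c_i=\lim_{\ell\to-\infty}c_i(\ell)$ well defined and non-zero---follows from the linear-independence argument underlying Theorem~\ref{storogfin}, exploiting the distinctness of the parameters $\mu_k$ and $-\nu_d$. The explicit formulas for the $c_i$ obtained in this way are then recorded as Corollary~\ref{korollaret}.

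Third, I would rewrite the ruin probability using the first of (\ref{equations}) with $i=1$ and $\zeta=0$, so
\[
\PPx(\tau(\ell)<\infty)=f_1(x)+(1-f_1(\ell))\,\PPx(A_c).
\]
Since $f_1(x)=\sum_{i=1}^{r}c_i(\ell)f^1_{\Gamma_{i,1}}(x)$ and $f^1_{\Gamma_{i,1}}$ is independent of $\ell$, step one immediately yields $f_1(x)\to\sum_{i=1}^{r}c_i f^1_{\Gamma_{i,1}}(x)$. For the correction term, $\PPx(A_c)\to 0$: a continuity passage requires a downward jump followed by deterministic travel along the exponential flow over a distance growing in $|\ell|$ without any intervening Poisson event, which has vanishing probability. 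This decay dominates whatever growth $f_1(\ell)$ exhibits (as quantified in step two), so the correction vanishes and (\ref{fml:limitresult}) follows, the sign in front of the sum reflecting the sign convention of the right-hand side of (\ref{matrixligning}).

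The main obstacle is the contour-deformation step: pushing the contours across the non-integer branch points of $\psi$ requires a consistent choice of holomorphic branches on either side of each branch cut and careful bookkeeping of the contributions from the wrapped portions. Once these asymptotics are in hand, everything downstream reduces to linear algebra together with dominated convergence.
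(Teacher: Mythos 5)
Your reformulation of the ruin probability,
\[
\PPx(\tau(\ell)<\infty)=f_1(x)+(1-f_1(\ell))\,\PPx(A_c)\,,
\]
is correct, but the argument you give that $\PPx(A_c)\to 0$ is wrong, and this is fatal to the proof: if $\PPx(A_c)$ vanished, your formula would give $\lim_\ell\PPx(\tau<\infty)=\lim_\ell f_1(x)=\sum_ic_if^1_{\Gamma_{i1}}(x)$, without the minus sign that the theorem asserts. What actually happens, as follows from the paper's computation, is that $f_1(\ell)\to-1$ and $f_2(\ell)\to0$, and plugging these together with $f_2(x)\to0$ into $\PPx(A_c)=\frac{f_2(x)-f_1(x)}{f_2(\ell)-f_1(\ell)}$ gives $\PPx(A_c)\to-\lim f_1(x)$, i.e.\ $\PPx(A_c)$ converges to the \emph{entire} limiting ruin probability while $\PPx(A_j)\to 0$. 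Intuitively, once the process is deep in the negative region the exponential flow carries it downward over distances much larger than a typical jump between consecutive jump times, so the process crosses remote levels by continuity, not by an overshooting jump; your heuristic ``a jump, then travel over a distance $|\ell|$ with no intervening jumps'' ignores this exponential acceleration (the flow takes only time $\tfrac{1}{\kappa}\log(\ell/y)$ from $y$ to $\ell$) and also incorrectly insists on a jump-free run from the last jump all the way to $\ell$. So there is no way to close the proof without determining $\lim_\ell f_1(\ell)$, which is exactly the analysis of $N^{3k}_{\Gamma_j}(\ell)$ and the Cramer's-rule asymptotics for $b_j(\ell)$ that you were hoping to circumvent.

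On your steps one and two: the route you sketch is genuinely different from the paper's. You propose to extract the $\ell\to-\infty$ asymptotics of $N^{3k}_{\Gamma_j}(\ell)$ by deforming the contour across the singularities of $\psi$ and collecting residues and branch-cut contributions, then assemble a limit matrix $A_\infty$. The paper instead parametrizes each contour directly, makes the change of variables $s=-\ell t$ to localize near the starting point of the contour, and applies dominated convergence; this is more elementary and avoids the branch-cut bookkeeping you flag as ``the main obstacle.'' Moreover, the paper does not attempt to renormalize $A(\ell)$ into a bounded matrix $A_\infty$ plus a small error; because the $N^{3k}_{\Gamma_j}(\ell)$ grow at \emph{different} exponential rates as $j$ varies, such a renormalization is not a single rescaling, and the paper instead works directly with Cramer's rule, determining the leading term of each determinant from the product of the dominant entries. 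If you want to keep your contour-deformation program, you would still need to reproduce the $b_j(\ell)$ asymptotics at this level of precision, not merely assert convergence of the system after renormalization.
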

\begin{Ex}
\begin{figure}[htbp]
\centering
\includegraphics[width=0.8\textwidth]{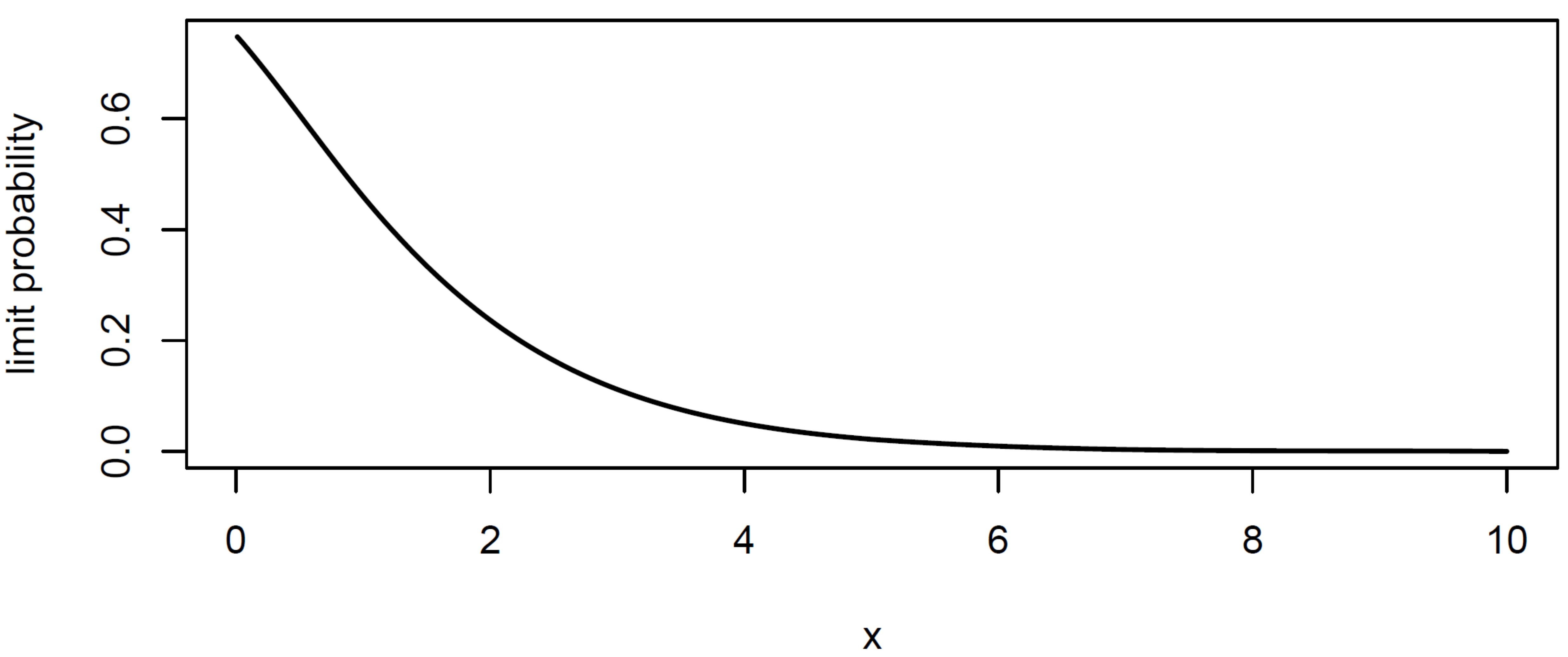}
\caption{Shows $\lim_{\ell \to-\infty}\PPx(\tau(\ell)<\infty)$ as a function of $x$.}
\label{fig2}
\end{figure}
Assume that $r=s=1$, $\kappa=1$, $p=2/3$, $q=1/3$ and $\mu=\nu=1$. Then the limit in (\ref{fml:limitresult}) is a decreasing function of $x$ as illustrated in Figure~\ref{fig2}
\end{Ex}
\begin{proof}[Proof of Theorem~\ref{thm:limitresult}]
Notation: In the proof we will write $f(\ell)=O(g(\ell))$ if there exists a constant $C$ such that $f(\ell)\sim Cg(\ell)$.

In the matrix $A(\ell)$ only
$N^{3k}_{\Gamma_{j}}(\ell)$ (for $k=1,\ldots,r$ and $j=-s,\ldots,r-1$) depends on
$\ell$. Exploring this dependence by applying the same technique as in the
$x\to\infty$ case yields for $k=1,\ldots,r$ and $i=-s,\ldots,-1$ that
\begin{eqnarray}\label{formelet}
\lim_{\ell\to-\infty}\frac{N^{3k}_{\Gamma_i}(\ell)}{\ee^{\ell\nu_{-i}}(-\ell)^{\frac{q\beta_{-i}\lambda}{\kappa}-1}}&=&\lim_{\ell\to-\infty}\frac{1}{\ee^{\ell\nu_{-i}}(-\ell)^{\frac{q\beta_{-i}\lambda}{\kappa}-1}}\int_{\Gamma_{i,2}}\frac{\psi(z)}{z-\mu_k}\ee^{-\ell z}\;\dd
z\nonumber\\
&=& \frac{\psi_{\setminus
    \{-\nu_{-i}\}}(-\nu_{-i})}{-\nu_{-i}-\mu_k}\int_{\tilde{\Gamma}}z^{-\frac{q\beta_{-i}\lambda}{\kappa}}\ee^z\;\dd z
\end{eqnarray}
if $-\nu_{-i}$ is a zero for $\psi$. Here
$$
\tilde{\Gamma}=\{(-1+i)t:0\leq t<\infty\}
$$
and
$$
\psi_{\setminus\{-\nu_{-i}\}}=z^{-1}\left(\prod_{k=1}^r(z-\mu_k)^{-\frac{p\alpha_k\lambda}{\kappa}}\right)\left(\prod_{d=1,d\neq
    i}^s(z+\nu_d)^{-\frac{q\beta_d\lambda}{\kappa}}\right)\,.
$$
If $-\nu_{-i}$ is a singularity the result is 
\begin{equation}\label{formelto}
\lim_{\ell\to-\infty}\frac{N^{3k}_{\Gamma_i}(\ell)}{\ee^{\ell\nu_{-i}}(-\ell)^{\frac{q\beta_{-i}\lambda}{\kappa}-1}}= \frac{\psi_{\setminus
    \{-\nu_{-i}\}}(-\nu_{-i})}{-\nu_{-i}-\mu_k}\int_{\tilde{\Gamma}_a}z^{-\frac{q\beta_{-i}\lambda}{\kappa}}\ee^z\;\dd z\,,
\end{equation}
where 
$$
\tilde{\Gamma}_a=\{a+(1+i)t:-\infty< t\leq 0\}+\{a+(-1+i)t:0\leq t< \infty\}\,.
$$
for any $a>0$. Furthermore 
\begin{equation}\label{formeltre}
\lim_{\ell\to-\infty}N_{\Gamma_0}^{3k}(\ell)=\frac{\psi_{\setminus\{0\}}(0)}{-\mu_k}\int_{\tilde{\Gamma}_a}z^{-1}\ee^z\,\dd
z\,.
\end{equation}
Finally, the constants related to $\mu_1,\ldots,\mu_{r}$ satisfy
the following if $\mu_i$ is a zero
\begin{eqnarray}
\lim_{\ell\to-\infty}\frac{N_{\Gamma_i}^{3i}(\ell)}{\ee^{-\ell\mu_i}(-\ell)^{-\frac{p\alpha_i\lambda}{\kappa}}}&=&
\psi_{\setminus\{\mu_i\}}(\mu_i)\int_{\tilde{\Gamma}}z^{-\frac{p\alpha_i\lambda}{\kappa}-1}\ee^z\;\dd
z\label{formelfire}\\
\lim_{\ell\to-\infty}\frac{N_{\Gamma_i}^{3k}(\ell)}{\ee^{-\ell\mu_i}(-\ell)^{-\frac{p\alpha_i\lambda}{\kappa}-1}}&=&
\frac{\psi_{\setminus\{\mu_i\}}(\mu_i)}{\mu_i-\mu_k}\int_{\tilde{\Gamma}}z^{-\frac{p\alpha_i\lambda}{\kappa}}\ee^z\;\dd
z
\quad \textrm{ if }k\neq i\label{formelfem}
\end{eqnarray}
and if it is a singularity
\begin{eqnarray}
\lim_{\ell\to-\infty}\frac{N_{\Gamma_i}^{3i}(\ell)}{\ee^{-\ell\mu_i}(-\ell)^{-\frac{p\alpha_i\lambda}{\kappa}}}&=&
\psi_{\setminus\{\mu_i\}}(\mu_i)\int_{\tilde{\Gamma}_a}z^{-\frac{p\alpha_i\lambda}{\kappa}-1}\ee^z\;\dd
z\label{formelseks}\\
\lim_{\ell\to-\infty}\frac{N_{\Gamma_i}^{3k}(\ell)}{\ee^{-\ell\mu_i}(-\ell)^{-\frac{p\alpha_i\lambda}{\kappa}-1}}&=&
\frac{\psi_{\setminus\{\mu_i\}}(\mu_i)}{\mu_i-\mu_k}\int_{\tilde{\Gamma}_a}z^{-\frac{p\alpha_i\lambda}{\kappa}}\ee^z\;\dd
z
\quad \textrm{ if }k\neq i\,.\label{formelsyv}
\end{eqnarray}
When calculating the determinant of $A(\ell)$ it is crucial that
$N^{3k}_{\Gamma_i}(\ell)$ has the largest rate of growth when $k=i$. Furthermore, if $\mu_i$  is a singularity of an order in
$(0,1)$ and $k\neq i$ then the limit integral for $N^{3k}_{\Gamma_i}(\ell)$
is zero while the integral in the limit of $N^{3i}_{\Gamma_i}(\ell)$ is not. Define the matrices
$$
M=\left[\begin{array}{cccccc}
M^{11}_{\Gamma_1} & \ldots & M^{11}_{\Gamma_r} & N^{11}_{\Gamma_{-s}} & \ldots &
N^{11}_{\Gamma_{-1}}\\
\vdots  & \ddots & \vdots & \vdots & \ddots &\vdots \\
M^{1r}_{\Gamma_1} & \ldots & M^{1r}_{\Gamma_r} & N^{1r}_{\Gamma_{-s}} & \ldots &
N^{1r}_{\Gamma_{-1}}\\
-M^{21}_{\Gamma_1} & \ldots & -M^{21}_{\Gamma_r} & N^{21}_{\Gamma_{-s}} & \ldots &
N^{21}_{\Gamma_{-1}}\\
\vdots  & \ddots & \vdots & \vdots & \ddots &\vdots \\
-M^{2s}_{\Gamma_1} & \ldots & -M^{2s}_{\Gamma_r} & N^{2s}_{\Gamma_{-s}} & \ldots &
N^{2s}_{\Gamma_{-1}}
\end{array}\right]
$$
and 
$$
N(\ell)=\left[\begin{array}{ccc}
N_{\Gamma_0}^{31}(\ell)& \hdots & N_{\Gamma_{r-1}}^{31}(\ell)\\
\vdots & \ddots & \vdots\\
N_{\Gamma_0}^{3r}(\ell)& \hdots & N_{\Gamma_{r-1}}^{3r}(\ell)\\
\end{array}\right]\;. 
$$
The formulas (\ref{formelet}) -- (\ref{formelsyv}) yield that $\det(A(\ell))\sim\big(\det(N(\ell))(-1)^{r+s+1}\det(M)\big)$
and by using that $N^{3i}_{\Gamma_i}(\ell)$ has the most rapid growth compared
  to $N^{3k}_{\Gamma_i}(\ell)$ when $k\neq i$, it is seen that
$$
\det(N(\ell))\sim\left(N_{\Gamma_0}^{3r}(\ell)\prod_{i=1}^{r-1}N_{\Gamma_i}^{3i}(\ell)\right)
$$
which implies that
$$
\det(N(\ell))=O\left(\ee^{\ell\sum_{j=1}^{r-1}\mu_j}(-\ell)^{\sum_{j=1}^{r-1}\frac{p\alpha_j\lambda}{\kappa}}\right)\,.
$$
Cramer's Rule provides the constants $c_1(\ell),\ldots,c_r(\ell)$ and $b_{-s}(\ell),\ldots,b_{r-1}(\ell)$ in the
equation system (\ref{matrixligning}):
\[
c_1(\ell)=\frac{\det(A_1(\ell))}{\det(A(\ell))}\,,
\]
where
\[
A_1(\ell)=\left[\begin{array}{ccccccc}
\frac{1}{\mu_1}& 0 & \ldots & 0 & N^{31}_{\Gamma_{-s}}(\ell) & \ldots & N^{31}_{\Gamma_{r-1}}(\ell)\\
\vdots & \vdots  & \ddots & \vdots & \vdots & \ddots &\vdots \\
\frac{1}{\mu_r}& 0 & \ldots & 0 & N^{3r}_{\Gamma_{-s}}(\ell) & \ldots &
N^{3r}_{\Gamma_{r-1}}(\ell)\\
0 & M^{11}_{\Gamma_2} & \ldots & M^{11}_{\Gamma_r} & N^{11}_{\Gamma_{-s}} & \ldots &
N^{11}_{\Gamma_{r-1}}\\
\vdots & \vdots  & \ddots & \vdots & \vdots & \ddots &\vdots \\
0& M^{1r}_{\Gamma_2} & \ldots & M^{1r}_{\Gamma_r} & N^{1r}_{\Gamma_{-s}} & \ldots &
N^{1r}_{\Gamma_{r-1}}\\
0& -M^{21}_{\Gamma_2} & \ldots & -M^{21}_{\Gamma_r} & N^{21}_{\Gamma_{-s}} & \ldots &
N^{21}_{\Gamma_{r-1}}\\
\vdots & \vdots & \ddots & \vdots & \vdots & \ddots &\vdots \\
0& -M^{2s}_{\Gamma_2} & \ldots & -M^{2s}_{\Gamma_r} & N^{2s}_{\Gamma_{-s}} & \ldots &
N^{2s}_{\Gamma_{r-1}}
\end{array}\right]\,,
\]
and similarly for the remaining constants. It is seen that 
$$
\det(A_i(\ell))=O\left(\ee^{\ell\sum_{j=1}^{r-1}\mu_j}(-\ell)^{\sum_{j=1}^{r-1}\frac{p\alpha_j\lambda}{\kappa}}\right)
$$ 
for $i=1,\ldots,r+s$ and therefore
\begin{eqnarray*}
c_i(\ell)&=& \frac{\det(A_i(\ell))}{\det(A(\ell))}=O(1)\qquad i=1,\ldots,r\\
b_j(\ell)&=& \frac{\det(A_{r+s+1+j}(\ell))}{\det(A(\ell))}=O(1) \qquad j=-s,\ldots,-1\,.
\end{eqnarray*}
Furthermore, 
\begin{eqnarray*}
\det(A_{r+s+1}(\ell))&\sim&\left(\det(M)\times \frac{1}{\mu_r}\prod_{i=1}^{r-1}N_{\Gamma_i}^{3i}(\ell)\right)\\
\det(A_{r+s+1+j}(\ell))&\sim&\left(\det(M)\times
  \frac{1}{\mu_j}N_{\Gamma_0}^{3r}(\ell)\prod_{i=1,i\neq
    j}^{r-1}N_{\Gamma_i}^{3i}(\ell)\right)\qquad j=1,\ldots,r-1
\end{eqnarray*}
such that
\begin{eqnarray*}
b_0(\ell)&=&\frac{\det(A_{r+s+1}(\ell))}{\det(A(\ell))}\sim\left(\frac{1}{\mu_r}\frac{1}{N^{3r}_{\Gamma_0}(\ell)}\right)\\
b_j(\ell)&=&\frac{\det(A_{r+s+1+j}(\ell))}{\det(A(\ell))}\sim\left(\frac{1}{\mu_j}\frac{1}{N^{3j}_{\Gamma_j}(\ell)}\right)\nonumber\\
&\phantom{=}&\phantom{\frac{\det(A_{r+s+1+j}(\ell))}{\det(A(\ell))}}=O\left(\ee^{\ell\mu_j}(-\ell)^{\frac{p\alpha_j\lambda}{\kappa}}\right)\qquad
j=1,\ldots,r-1\,.
\end{eqnarray*}
The equivalent constants $\tilde{c}_1(\ell),\ldots,\tilde{c}_r(\ell)$ and
$\tilde{b}_{-s+1}(\ell),\ldots, \tilde{b}_r(\ell)$ that belongs to the second
partial eigenfunction solve an equation system
similar to (\ref{matrixligning}):
\begin{equation}\label{matrixligningto}
\left[\begin{array}{cccccc}
0 & \ldots & 0 & N^{31}_{\Gamma_{-s+1}}(\ell) & \ldots & N^{31}_{\Gamma_{r}}(\ell)\\
\vdots  & \ddots & \vdots & \vdots & \ddots &\vdots \\
0 & \ldots & 0 & N^{3r}_{\Gamma_{-s+1}}(\ell) & \ldots &
N^{3r}_{\Gamma_{r}}(\ell)\\
M^{11}_{\Gamma_1} & \ldots & M^{11}_{\Gamma_r} & N^{11}_{\Gamma_{-s+1}} & \ldots &
N^{11}_{\Gamma_{r}}\\
\vdots  & \ddots & \vdots & \vdots & \ddots &\vdots \\
M^{1r}_{\Gamma_1} & \ldots & M^{1r}_{\Gamma_r} & N^{1r}_{\Gamma_{-s+1}} & \ldots &
N^{1r}_{\Gamma_{r}}\\
-M^{21}_{\Gamma_1} & \ldots & -M^{21}_{\Gamma_r} & N^{21}_{\Gamma_{-s+1}} & \ldots &
N^{21}_{\Gamma_{r}}\\
\vdots  & \ddots & \vdots & \vdots & \ddots &\vdots \\
-M^{2s}_{\Gamma_1} & \ldots & -M^{2s}_{\Gamma_r} & N^{2s}_{\Gamma_{-s+1}} & \ldots &
N^{2s}_{\Gamma_{r}}
\end{array}\right]\left[\begin{array}{c}
\tilde{c}_1(\ell)\\
\vdots\\
\tilde{c}_r(\ell)\\
\tilde{b}_{-s+1}(\ell)\\
\vdots\\
\tilde{b}_{r}(\ell)
\end{array}\right]=\left[\begin{array}{c}
\frac{1}{\mu_1}\\
\vdots\\
\frac{1}{\mu_r}\\
0\\
\vdots\\
0\end{array}\right]\,, 
\end{equation}
where the integration contour $\Gamma_{-s}$ is replaced by
$\Gamma_r$ in order to obtain a new and independent partial
eigenfunction. It is similarly shown that the constants have the following
asymptotics as functions of $\ell$
\begin{eqnarray*}
\tilde{c}_i(\ell)&=&O\left(\frac{1}{\mu_1}\frac{1}{N^{31}_{\Gamma_1}(\ell)}\right)=O\left(\ee^{-\ell\mu_1}(-\ell)^{\frac{p\alpha_1\lambda}{\kappa}}\right)\qquad
i=-s,\ldots,-1\\
\tilde{b}_j(\ell)&=&
O\left(\frac{1}{\mu_1}\frac{1}{N^{31}_{\Gamma_1}}(\ell)\right)=O\left(\ee^{-\ell\mu_1}(-\ell)^{\frac{p\alpha_1\lambda}{\kappa}}\right)\qquad
j=-s+1,\ldots,0\\
\tilde{b}_j(\ell)&\sim&
\left(\frac{1}{\mu_j}\frac{1}{N^{3j}_{\Gamma_j}}(\ell)\right)=O\left(\ee^{-\ell\mu_j}(-\ell)^{\frac{p\alpha_j\lambda}{\kappa}}\right)\qquad
j=1,\ldots,r\,.
\end{eqnarray*}
The asymptotic behaviour of the $f_{\Gamma_{j,2}}^2$ functions is of
interest as well. Similar to the previous analysis it is seen that for $j=-s,\ldots,-1$ is
\begin{align*}
&\lim_{\ell\to-\infty}\frac{f^2_{\Gamma_{j,2}}(\ell)}{\ee^{\ell\nu_{-j}}(-\ell)^{\frac{q\beta_j\lambda}{\kappa}-1}}=\psi_{\setminus\{-\nu_{-j}\}}(-\nu_{-j})\int_{\tilde{\Gamma}_a}z^{-\frac{q\beta_j\lambda}{\kappa}}\ee^z\;\dd
z\,,\textrm{  if $\nu_{-j}$ is a singularity}\\
&\lim_{\ell\to-\infty}\frac{f^2_{\Gamma_{j,2}}(\ell)}{\ee^{\ell\nu_{-j}}(-\ell)^{\frac{q\beta_j\lambda}{\kappa}-1}}=\psi_{\setminus\{-\nu_{-j}\}}(-\nu_{-j})\int_{\tilde{\Gamma}}z^{-\frac{q\beta_j\lambda}{\kappa}}\ee^z\;\dd
z\,,\textrm{  if $\nu_{-j}$ is a root}\,.
\end{align*}
For $j=0$ is
$$
\lim_{j\to-\infty}f^2_{\Gamma_{0,2}}(\ell)=\psi_{\setminus\{0\}}(0)\int_{\tilde{\Gamma}_a}z^{-1}\ee^z\;\dd
z\,,
$$
and for $j=1,\ldots,r$ is
\begin{eqnarray*}
\lim_{\ell\to-\infty}\frac{f^2_{\Gamma_{j,2}}(\ell)}{\ee^{-\ell\mu_{j}}(-\ell)^{\frac{p\alpha_j\lambda}{\kappa}-1}}&=&\psi_{\setminus\{\mu_{j}\}}(\mu_{j})\int_{\tilde{\Gamma}_a}z^{-\frac{p\alpha_j\lambda}{\kappa}}\ee^z\;\dd
z\,,
\quad \textrm{if $\mu_{j}$ is a singularity}\\
\lim_{\ell\to-\infty}\frac{f^2_{\Gamma_{j,2}}(\ell)}{\ee^{-\ell\mu_{j}}(-\ell)^{\frac{p\alpha_j\lambda}{\kappa}-1}}&=&\psi_{\setminus\{\mu_{j}\}}(\mu_{j})\int_{\tilde{\Gamma}}z^{-\frac{q\alpha_j\lambda}{\kappa}}\ee^z\;\dd
z\,,
\quad \textrm{if $\mu_{j}$ is a root}\,.
\end{eqnarray*}
By comparing these results with the asymptotics for the constants
$c_i(\ell)$, $\tilde{c}_i(\ell)$, $b_j(\ell)$ and $\tilde{b}_j(\ell)$ it is seen that
\begin{itemize}
\item $b_j(\ell) f^2_{\Gamma_{j,2}}(\ell)$ tends to zero exponentially fast as
  $\ell\to-\infty$ for $j=-s,\ldots,-1$
\item $\tilde{b}_j(\ell)f^2_{\Gamma_{j,2}}(\ell)$ tends to zero exponentially
  fast as $\ell\to -\infty$ for $j=-s+1,\ldots,0$
\item $b_j(\ell)f^2_{\Gamma_{j,2}}(\ell)=O\left(\frac{1}{-\ell}\right)$ for $\ell\to-\infty$ when $j=1,\ldots,r-1$
\item $\tilde{b}_j(\ell)f^2_{\Gamma_{j,2}}(\ell)=O\left(\frac{1}{-\ell}\right)$ for $\ell\to-\infty$ when $j=1,\ldots,r$\,.
\end{itemize} 
Finally, the non--zero limit of $b_0(\ell) f^{2}_{\Gamma_{0,2}}(\ell)$ when
$\ell\to -\infty$ is
\begin{eqnarray*}
\lim_{\ell\to-\infty}b_0(\ell)
f^{2}_{\Gamma_{0,2}}(\ell)&=&\lim_{\ell\to-\infty}\frac{1}{\mu_r}\frac{1}{N^{3r}_{\Gamma_0}(\ell)}f^2_{\Gamma_{0,2}}(\ell)\\
&=&\frac{1}{\mu_r}\frac{\psi_{\setminus\{0\}}(0)\int_{\tilde{\Gamma}_a}z^{-1}\ee^z\;\dd
  z}{\frac{\psi_{\setminus\{0\}}(0)}{-\mu_r}\int_{\tilde{\Gamma}_a}z^{-1}\ee^z\,\dd z}\\
&=&-1\,.
\end{eqnarray*}
Hence it has been shown that
\begin{eqnarray*}
\lim_{\ell\to-\infty}f_1(\ell)&=&
\lim_{\ell\to-\infty}\sum_{j=-s}^{r-1}b_j(\ell)f_{\Gamma_{j,2}}^2(\ell)=-1\\
\lim_{\ell\to-\infty}f_2(\ell)&=&\lim_{\ell\to-\infty}\sum_{j=-s+1}^{r}\tilde{b}_j(\ell)f_{\Gamma_{j,2}}^2(\ell)=0\,.
\end{eqnarray*}
Furthermore it is shown that all $\tilde{c}_i(\ell)$ decrease to zero
so 
$$
\lim_{\ell\to-\infty}f_2(x)=\lim_{t\to-\infty}\sum_{i=1}^r\tilde{c}_i(\ell)f^1_{\Gamma_{i,1}}(x)=0
$$
and since all $c_i$ has a non--zero limit, then $\lim_{\ell\to-\infty}f_1(x)$
is well--defined and non--zero. Therefore
\begin{align*}
&\lim_{\ell\to-\infty}\PPx(\tau<\infty)\\
&=\lim_{\ell\to-\infty}f_1(x)\frac{1-f_2(\ell)}{f_1(\ell)-f_2(\ell)}+f_2(x)\frac{f_1(\ell)-1}{f_1(\ell)-f_2(\ell)}=-\lim_{\ell\to-\infty}f_1(x)\,.
\end{align*}
\end{proof}
\noindent The asymptotic expression for $c_i(\ell)$ can found to be
$$
c_i(\ell)\sim\left((-1)^{r+s+1-i}\frac{det(M_i)}{det(M)}\frac{1}{\mu_rN^{3r}_{\Gamma_0}(\ell)}\right)\,,
$$
where 
$$
M_i=\left[\begin{array}{ccccccccc}
M^{11}_{\Gamma_1} & \ldots & M^{11}_{\Gamma_{i-1}} &
M^{11}_{\Gamma_{i+1}} & \ldots & M^{11}_{\Gamma_r} & N^{11}_{\Gamma_{-s}} & \ldots &
N^{11}_{\Gamma_{0}}\\
\vdots  & \ddots & \vdots & \vdots & \ddots & \vdots & \vdots & \ddots &\vdots \\
M^{1r}_{\Gamma_1} & \ldots & M^{1r}_{\Gamma_{i-1}} &
M^{1r}_{\Gamma_{i+1}} & \ldots & M^{1r}_{\Gamma_r} & N^{1r}_{\Gamma_{-s}} & \ldots &
N^{1r}_{\Gamma_{0}}\\
-M^{21}_{\Gamma_1} & \ldots & -M^{21}_{\Gamma_{i-1}} &
-M^{21}_{\Gamma_{i+1}} & \ldots & -M^{21}_{\Gamma_r} & N^{21}_{\Gamma_{-s}} & \ldots &
N^{21}_{\Gamma_{0}}\\
\vdots  & \ddots & \vdots& \vdots & \ddots & \vdots & \vdots & \ddots &\vdots \\
-M^{2s}_{\Gamma_1} & \ldots & -M^{2s}_{\Gamma_{i-1}} &
-M^{2s}_{\Gamma_{i+1}} &\ldots & -M^{2s}_{\Gamma_r} & N^{2s}_{\Gamma_{-s}} & \ldots &
N^{2s}_{\Gamma_{0}}
\end{array}\right]\;.
$$
Hence we have
\begin{Cor}\label{korollaret} 
For $i=1,\ldots,r$ it holds that
\begin{eqnarray*}
\lim_{\ell\to-\infty}c_i(\ell)&=&(-1)^{r+s+1-i}\frac{\det(M_i)}{\det(M)}\frac{1}{\mu_r}\left(\frac{\psi_{\setminus\{0\}}(0)}{-\mu_k}\int_{\tilde{\Gamma}_a}z^{-1}\ee^z\,\dd
  z\right)^{-1}\\
&=&
(-1)^{r+s-i}\frac{\det(M_i)}{\det(M)}\left(\psi_{\setminus\{0\}}(0)\int_{\tilde{\Gamma}_a}z^{-1}\ee^z\,\dd
  z\right)^{-1}\,.
\end{eqnarray*}
\end{Cor}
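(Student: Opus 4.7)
The approach is to combine Cramer's rule $c_i(\ell)=\det(A_i(\ell))/\det(A(\ell))$ with the dominant--permutation analysis already carried out in the proof of Theorem~\ref{thm:limitresult}. Recall that
$$
\det(A(\ell))\sim (-1)^{r+s+1}\det(M)\,N^{3r}_{\Gamma_0}(\ell)\prod_{l=1}^{r-1}N^{3l}_{\Gamma_l}(\ell),
$$
so the task reduces to extracting the leading asymptotic of the numerator.

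In $A_i(\ell)$ the $i$-th column has been replaced by $(1/\mu_1,\ldots,1/\mu_r,0,\ldots,0)^T$, so a Laplace expansion along that column gives $\det(A_i(\ell))=\sum_{k=1}^r(-1)^{k+i}\mu_k^{-1}\det(B_{k,i}(\ell))$, where $B_{k,i}(\ell)$ is $A(\ell)$ with row $k$ and column $i$ struck out. I would apply the same dominant--term argument of Theorem~\ref{thm:limitresult} to each $B_{k,i}(\ell)$: the optimal matching pairs the remaining top rows with the fastest--growing contours $\Gamma_1,\ldots,\Gamma_{r-1}$, leaving exactly one top row to be matched with $\Gamma_0$. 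That matching is free of extra cost only when the deleted row is the last one, $k=r$; for any other $k$ the missing row must be paired with $\Gamma_{k_0}$ or $\Gamma_0$ at a relative cost of $(-\ell)^{-1}$ or $e^{\ell\mu_{k_0}}$, both sub-dominant. Hence only the $k=r$ term contributes to the leading order, and the cofactor coming from the bottom $r+s$ rows restricted to the columns $\{c_1,\ldots,\widehat c_i,\ldots,c_r,\Gamma_{-s},\ldots,\Gamma_{-1},\Gamma_0\}$ is precisely $\det(M_i)$.

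The main obstacle is the sign bookkeeping. Combining the cofactor sign $(-1)^{r+i}$ from picking the $(r,i)$--entry with the permutation sign of the bijection realising the dominant matching (the same structural permutation that appeared in the expansion of $\det(A(\ell))$, but with $\Gamma_0$ shifted into the bottom cofactor), and dividing by the $(-1)^{r+s+1}$ already present in $\det(A(\ell))$, produces the announced factor $(-1)^{r+s+1-i}$; I would verify this by tracking the two permutations side by side.

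Putting the pieces together,
$$
c_i(\ell)\sim (-1)^{r+s+1-i}\,\frac{\det(M_i)}{\det(M)}\cdot\frac{1}{\mu_r\,N^{3r}_{\Gamma_0}(\ell)},
$$
and substituting the explicit constant for $\lim_{\ell\to-\infty}N^{3r}_{\Gamma_0}(\ell)$ from (\ref{formeltre}) with $k=r$ cancels the $1/\mu_r$ against the $1/(-\mu_r)$ inside that limit, absorbing a further minus sign to turn $(-1)^{r+s+1-i}$ into $(-1)^{r+s-i}$ and yielding the second closed form stated in the corollary.
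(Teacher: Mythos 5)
Your proposal takes essentially the same route as the paper: the paper derives the Corollary by first asserting the asymptotic $c_i(\ell)\sim(-1)^{r+s+1-i}\tfrac{\det(M_i)}{\det(M)}\tfrac{1}{\mu_r N^{3r}_{\Gamma_0}(\ell)}$ (via Cramer's rule and the dominant-permutation analysis from the proof of Theorem~\ref{thm:limitresult}) and then substitutes the limit \refs{formeltre} with $k=r$, cancelling $1/\mu_r$ against the $-\mu_r$ and flipping the sign. Your identification of the $k=r$ cofactor as the only leading contribution is correct (the $k<r$ terms are down by a factor $(-\ell)^{-1}$ because row $r$ can then at best be paired with $\Gamma_k$, trading a diagonal $N^{3k}_{\Gamma_k}$ for an off-diagonal $N^{3r}_{\Gamma_k}$), and the bottom cofactor is indeed $\det(M_i)$. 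The one thing you wave at rather than carry out is the sign bookkeeping; the paper is equally terse there, and in fact its intermediate claims (e.g.\ that $\det(A(\ell))\sim(-1)^{r+s+1}\det(N(\ell))\det(M)$ and that $\det(N(\ell))\sim N^{3r}_{\Gamma_0}\prod_{j<r}N^{3j}_{\Gamma_j}$) each carry sign slips that only cancel in the final ratio $\det(A_i)/\det(A)$, so a direct verification of the overall $(-1)^{r+s+1-i}$ --- as you propose --- is the right thing to do. You also implicitly correct the typo in the Corollary's first display, where $\mu_k$ should read $\mu_r$.
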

\subsection{Negative drift and the undershoot}\label{undershoot}
Consider the negative drift case,
$\kappa<0$, where the ruin probability is 1. This situation is particularly simple because only one partial
eigenfunction, $f$,
is needed, since crossing $\ell$ through continuity is not possible. The
Laplace transform of the undershoot is therefore expressed by the
simple formula
$$
\EEx[\ee^{-\zeta Z}]=f(x)\,.
$$ 
Since $\psi$ satisfies that
$|\psi(z)|=O(|z|^{-1-\tfrac{\lambda}{\kappa}})$, the negative
$\kappa$ makes infinite integration contours impossible. We shall apply Theorem~\ref{satningfin} and choose finite integration contours as described in \cite[Section 5]{art3}. However, in \cite{art3} the contours are suggested to be half--circles and circles, but that choice makes the calculations of our prblem too complicated. Thus we will use line segments instead. Note that $\mu_1$ is always a
zero for $\psi$. For each $i=2,\ldots,r$ define:\\[1mm]
If $\mu_i$ is a zero define $\Gamma_i$ as
\begin{align*}
&\{\mu_i+(-1-i)t\;:\;0\leq t\leq \frac{\mu_i-\mu_1}{2}\}\\
&\phantom{=}\cup
\{\mu_i-i(\mu_i-\mu_1)+(-1+i)t\;:\; \frac{\mu_i-\mu_1}{2}\leq t\leq
\mu_i-\mu_1\}\,.
\end{align*}
If $\mu_i$ is a singularity define $\Gamma_i$ as
\begin{align*}
 &\{\mu_i+\tfrac{a}{-\ell}+i(\mu_i+\tfrac{a}{-\ell}-\mu_1)+(1+i)t:
  -(\mu_i+\tfrac{a}{-\ell}-\mu_1)\leq t\leq-\frac{\mu_i+\frac{a}{-\ell}-\mu_1}{2}\}\\
&\phantom{=}\cup\{\mu_i+\tfrac{a}{-\ell}+(1-i)t:-\frac{\mu_i+\frac{a}{-\ell}-\mu_1}{2}\leq
t\leq 0\}\\
&\phantom{=}\cup\{\mu_i+\tfrac{a}{-\ell}+(-1-i)t:\frac{\mu_i+\frac{a}{-\ell}-\mu_1}{2}\leq
t\leq 0\}\\
&\phantom{=}\cup\{\mu_i-\tfrac{a}{-\ell}+i(\mu_i+\tfrac{a}{-\ell}-\mu_1)+(-1+i)t:
  \frac{\mu_i+\frac{a}{-\ell}-\mu_1}{2}\leq t\leq \mu_i+\tfrac{a}{-\ell}-\mu_1\}\,.
\end{align*}
A rough sketch of the two contours can be seen on Figure \ref{undershootfig}.
\begin{figure}
\setlength{\unitlength}{5mm}
\begin{center}
\begin{picture}(26,11)(-1,-6.5)
\put(-1,0){\vector(1,0){11}}
\put(0,-4){\vector(0,1){8}}
\put(1,0){\circle*{0.2}}
\put(7,0){\circle*{0.2}}
\put(1,0){\line(1,-1){3}}
\put(4,-3){\line(1,1){3}}
\put(0,-6.5){$\mu_i$ is a zero for $\psi$.}
\put(1.35,0.2){$\mu_1$}
\put(7.35,0.2){$\mu_i$}
\put(6,-2){$\Gamma_i$}
\put(12,0){\vector(1,0){11}}
\put(13,-4){\vector(0,1){8}}
\put(14,0){\circle*{0.2}}
\put(20,0){\circle*{0.2}}
\put(14,0){\line(1,1){3.5}}
\put(14,0){\line(1,-1){3.5}}
\put(17.5,-3.5){\line(1,1){3.5}}
\put(17.5,3.5){\line(1,-1){3.5}}
\put(13,-6.5){$\mu_i$ is a singularity for $\psi$.}
\put(13.3,-0.65){$\mu_1$}
\put(19.5,-0.65){$\mu_i$}
\put(20.5,0.65){$\mu_i+\tfrac{a}{-\ell}$}
\put(15,2.2){$\Gamma_i$}
\end{picture}
\caption{The choice of contours in the negative drift case\,.}
\label{undershootfig}
\end{center} 
\end{figure}
The partial eigenfunction $f$ is defined by
\begin{equation}\label{fdefinition}
f(y)=\sum_{i=2}^rc_if_{\Gamma_i}(y)+Uf^*(y)+f_0(y)\,,
\end{equation}
where $f^*(y)=1_{[\ell;\infty[}(y)$, and the parameters $c_2,\ldots, c_r$ and $U$ are the solutions of the
equation
\begin{equation}\label{matrixligningtre}
\left[\begin{array}{cccc}
-\frac{1}{\mu_1}(\ell) & M_{\Gamma_2}^1(\ell) & \cdots & M^{1}_{\Gamma_{r}}(\ell)\\
-\frac{1}{\mu_2}(\ell) & M_{\Gamma_2}^2(\ell) & \cdots & M^{2}_{\Gamma_{r}}(\ell)\\
\vdots  & \vdots & \ddots & \vdots\\
-\frac{1}{\mu_r}(\ell) & M_{\Gamma_2}^r(\ell) & \cdots & M^{r}_{\Gamma_{r}}(\ell)
\end{array}\right]\left[\begin{array}{c}
U\\
c_2\\
\vdots\\
c_r
\end{array}\right]=\left[\begin{array}{c}
-\frac{1}{\mu_1+\zeta}\\
\vdots\\
-\frac{1}{\mu_r+\zeta}
\end{array}\right] 
\end{equation}
where we shall denote the first matrix by $B(\ell)$ and the constants $M_{\Gamma_i}^k(\ell)$ are given as
\begin{equation}\label{fml:lastformula}
M_{\Gamma_i}^k(\ell)=\int_{\Gamma_i}\frac{\psi(z)}{z-\mu_k}\ee^{-\ell z}\,\dd
z
\end{equation} 
for $i=2,\ldots,r$ and $k=1,\ldots,r$. To explore the
asymptotic behaviour of $U, c_2,\ldots,c_r$ and through that the
behaviour of $f$, it is necessary to study the constants in (\ref{fml:lastformula}). 

The following result states
that the limit of the undershoot is a simple exponential distribution
with parameter $\mu_1$ from the
dominating part of the downward jumps.
\begin{Th}
For all $\zeta\geq 0$ it holds that
$$
\lim_{\ell\to-\infty}\EEx[\ee^{-\zeta Z}]= \frac{\mu_1}{\mu_1+\zeta}\,.
$$
\end{Th}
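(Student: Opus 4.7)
Since $x>0>\ell$ we have $f^{*}(x)=1$ and $f_0(x)=0$, so the partial eigenfunction in~\eqref{fdefinition} evaluated at the starting point reduces to
\[
\EE^{x}[\ee^{-\zeta Z}]=f(x)=U+\sum_{i=2}^{r}c_if_{\Gamma_i}(x).
\]
The plan is therefore to apply Cramer's rule to the linear system~\eqref{matrixligningtre} in order to prove two facts: (a) $U\to\mu_1/(\mu_1+\zeta)$, and (b) $c_i\to 0$ exponentially fast for $i=2,\ldots,r$, while each $f_{\Gamma_i}(x)$ stays bounded (since the contour $\Gamma_i$ remains within a compact region and $\operatorname{Re}z\geq \mu_1>0$ on $\Gamma_i$, so $|\ee^{-xz}|\leq\ee^{-\mu_1 x}$).

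The key step is an asymptotic description of the matrix entries $M^{k}_{\Gamma_i}(\ell)$ as $\ell\to-\infty$. On the finite contour $\Gamma_i$ the integrand $\psi(z)\ee^{-\ell z}/(z-\mu_k)$ is exponentially largest at the point $z=\mu_i$ (respectively $z=\mu_i+\tfrac{a}{-\ell}$ in the singularity case). Performing the substitution $u=(-\ell)(z-\mu_i)$ and writing $\psi(z)=(z-\mu_i)^{\rho_i}\psi_{\setminus\{\mu_i\}}(z)$ with $\rho_i=-p\alpha_i\lambda/\kappa$, dominated convergence gives, in the style of Lemmas~\ref{hjaelpeet}--\ref{hjaelpeto} and~\eqref{formelfire}--\eqref{formelsyv},
\[
M^{i}_{\Gamma_i}(\ell)\sim \psi_{\setminus\{\mu_i\}}(\mu_i)\,\ee^{-\mu_i\ell}(-\ell)^{-\rho_i}\,J_i,\qquad
M^{k}_{\Gamma_i}(\ell)\sim\frac{\psi_{\setminus\{\mu_i\}}(\mu_i)}{\mu_i-\mu_k}\,\ee^{-\mu_i\ell}(-\ell)^{-\rho_i-1}\,\tilde{J}_i
\]
for $k\neq i$, with nonzero constants $J_i,\tilde{J}_i$ given by integrals over the rescaled contour. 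Thus column $i$ of $B(\ell)$ (for $i\geq 2$) is dominated by its diagonal entry in row $i$ by a factor $(-\ell)$, while the first column of $B(\ell)$ is independent of $\ell$.

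Set $E_i=\ee^{-\mu_i\ell}(-\ell)^{-\rho_i}$ and factor $E_i$ out of column $i+1$ (for $i=2,\ldots,r$). The resulting matrix converges to a lower-triangular matrix whose first column is $(-1/\mu_k)_{k=1}^{r}$ and whose subsequent columns are $\psi_{\setminus\{\mu_i\}}(\mu_i)J_i\,\mathbf{e}_i$. Expanding along the first row one obtains
\[
\det B(\ell)\sim -\frac{1}{\mu_1}\prod_{i=2}^{r}E_i\,\psi_{\setminus\{\mu_i\}}(\mu_i)J_i.
\]
An identical argument applied to the matrix obtained by replacing the first column by the right-hand side $(-1/(\mu_k+\zeta))_{k=1}^r$ yields the same scaling with $1/\mu_1$ replaced by $1/(\mu_1+\zeta)$, so by Cramer's rule
\[
U=\frac{\det B_{U}(\ell)}{\det B(\ell)}\longrightarrow \frac{\mu_1}{\mu_1+\zeta}.
\]
For each $c_i$ ($i\geq 2$), Cramer's rule replaces column $i+1$ (which carries the growth $E_i$) by the bounded vector $(-1/(\mu_k+\zeta))_k$, so the numerator is $O\bigl(\prod_{j\neq i}E_j\bigr)$, and hence $c_i=O(E_i^{-1})=O(\ee^{\mu_i\ell}(-\ell)^{\rho_i})\to 0$ exponentially. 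Combined with the boundedness of $f_{\Gamma_i}(x)$, this proves the theorem.

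The main obstacle is the rigorous justification of the asymptotic formulae for $M^{k}_{\Gamma_i}(\ell)$: the contour $\Gamma_i$ depends on $\ell$ in the singularity case through the $a/(-\ell)$ shift, and one must handle the multivalued factor $(z-\mu_i)^{\rho_i}$ consistently across the two branches of $\Gamma_i$ (cf.\ Remark~\ref{kompleksversion}). Once dominated convergence is set up on the rescaled variable $u=(-\ell)(z-\mu_i)$ and the limiting contour is identified with $\tilde{\Gamma}$ or $\tilde{\Gamma}_a$, the remaining determinant and Cramer's rule computations are routine linear algebra as sketched above.
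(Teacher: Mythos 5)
Your plan follows the paper's proof essentially line for line: you establish the asymptotics of the entries $M^{k}_{\Gamma_i}(\ell)$ by the same rescaling/dominated-convergence argument used in Lemmas~\ref{hjaelpeet}--\ref{hjaelpeto} (the paper's formulas \eqref{asymptotik}--\eqref{asymptotikfire}), then read off the dominant behaviour of $\det B(\ell)$ and the Cramer-rule numerators by factoring the exponential scale $E_i$ out of each column corresponding to $c_i$, exactly as in the paper's relations \eqref{determinantb}--\eqref{determinantbi}, and conclude $U(\ell)\to\mu_1/(\mu_1+\zeta)$ while $c_i(\ell)\to 0$ exponentially.

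Two small slips worth noting, neither fatal. First, an indexing slip: since the unknown vector is $(U,c_2,\dots,c_r)$ the scaling $E_i$ should be pulled out of column $i$ for $i=2,\dots,r$, not column $i+1$. Second, your boundedness claim for $f_{\Gamma_i}(x)$ is slightly too strong in the singularity case: there the contour $\Gamma_i$ depends on $\ell$ through the $a/(-\ell)$ offset near $\mu_i$, and since $|\psi(z)|\sim|z-\mu_i|^{\rho_i}$ with $\rho_i<0$, the integral $\int_{\Gamma_i}|\psi(z)|\,|\ee^{-xz}|\,\dd z$ may grow polynomially in $|\ell|$ if $\rho_i\leq -1$. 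What the argument actually needs, and what is true, is that $f_{\Gamma_i}(x)$ grows at most polynomially while $c_i(\ell)$ decays like $\ee^{\mu_i\ell}$ times a power of $-\ell$, so the product still vanishes. With these cosmetic corrections the proposal matches the paper's proof.
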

\begin{proof}
First the behaviour of the constants $M_{\Gamma_i}^k(\ell)$ when
$\ell\to-\infty$ is explored. When $\mu_i$ is a zero (for some $i=2,\ldots,r$) and
$i\neq k$ the constant can be written as
\begin{align}\label{skalberegnes}
&M_{\Gamma_i}^k(\ell)=\int_0^{\frac{\mu_i-\mu_1}{2}}(-1-i)\frac{\psi(\mu_i+(-1-i)t)}{\mu_i+(-1-i)t-\mu_k}\ee^{-\ell(\mu_i+(-1-i)t)}\,\dd
t\\
&+\int_{\frac{\mu_i-\mu_1}{2}}^{\mu_i-\mu_1}(-1+i)\frac{\psi(\mu_i-i(\mu_i-\mu_1)+(-1+i)t)}{\mu_i-i(\mu_i-\mu_1)+(-1+i)t-\mu_k}\ee^{-\ell(\mu_i-i(\mu_i-\mu_1)+(-1+i)t)}\,\dd
t\,.\nonumber
\end{align}
Rewriting the expression and applying the usual substitution $s=-\ell t$ to the first part in
(\ref{skalberegnes}) yields
\begin{align*}
M_{\Gamma_i}^k(\ell)=\ee^{-\ell\mu_i}(-\ell)^{\frac{p\lambda\alpha_i}{\kappa}-1}&\int_0^{(-\ell)\frac{\mu_i-\mu_1}{2}}(-1-i)\frac{\psi_{\setminus
    \{\mu_i\}}(\mu_i+(-1-i)\tfrac{s}{-\ell})}{\mu_i+(-1-i)\tfrac{s}{-\ell}-\mu_k}\times\\
&((-1-i)s)^{-\frac{p\lambda\alpha_i}{\kappa}}\ee^{s(-1-i)}\,\dd s\,.
\end{align*}
Hence, by dominated convergence it is seen that the integral in the
last line has the limit
\begin{eqnarray*}
&\phantom{=}&\frac{\psi_{\setminus\{\mu_i\}}(\mu_i)}{\mu_i-\mu_k}\int_0^{\infty}(-1-i)((-1-i)s)^{-\frac{p\lambda
    \alpha_i}{\kappa}}\ee^{s(-1-i)}\,\dd s\\
&=&\frac{\psi_{\setminus\{\mu_i\}}(\mu_i)}{\mu_i-\mu_k}\int_{-\Gamma}z^{-\frac{p\lambda\alpha_i}{\kappa}}\ee^{z}\,\dd z\,,
\end{eqnarray*}
where
$$
-\Gamma=\{(-1-i)t:0\leq t< \infty\}\,.
$$
Now remains to discuss the asymptotics of the second part in
(\ref{skalberegnes}). Substituting $s=-\ell(t-(\mu_i-\mu_1))$ the expression equals
\begin{eqnarray*}
&\ee^{-\ell(\frac{\mu_1+\mu_i}{2}-i\frac{\mu_i-\mu_1}{2})}(-\ell)^{-1}&\\
&\times \int_0^{(-\ell)\frac{\mu_i-\mu_1}{2}}(-1+i)\psi\left(\frac{\mu_1+\mu_i}{2}-i\frac{\mu_i-\mu_1}{2}+(-1+i)\tfrac{s}{-\ell}\right)\ee^{s(-1+i)}\,\dd
s\,.&
\end{eqnarray*}
The integral has the following limit for $\ell\to-\infty$ 
$$
\psi\left(\frac{\mu_1+\mu_i}{2}-i\frac{\mu_i-\mu_1}{2}\right)\int_{\tilde{\Gamma}}\ee^z\,\dd
z
$$
by dominated convergence, where $\tilde{\Gamma}=\{(-1+i)t:0\leq
t<\infty\}$. Since the first part grows with a larger rate than the last
part is
\begin{equation}\label{asymptotik}
\lim_{\ell\to-\infty}\frac{M_{\Gamma_i}^k(\ell)}{\ee^{-\ell\mu_i}(-\ell)^{-\frac{p\lambda\alpha_i}{\kappa}-1}}=\frac{\psi_{\setminus\{\mu_i\}}(\mu_i)}{\mu_i-\mu_k}\int_{-\Gamma}z^{-\frac{p\lambda\alpha_i}{\kappa}}\ee^{z}\,\dd
z\,.
\end{equation}
A similar result is found in the case where $i=k$:
\begin{equation}\label{asymptotikto}
\lim_{\ell\to-\infty}\frac{M_{\Gamma_i}^k(\ell)}{\ee^{-\ell\mu_i}(-\ell)^{-\frac{p\lambda\alpha_i}{\kappa}}}=\psi_{\setminus\{\mu_i\}}(\mu_i)\int_{-\Gamma}z^{-\frac{p\lambda\alpha_i}{\kappa}-1}\ee^{z}\,\dd
z\,.
\end{equation}
The same substitution technique yields results in the cases where
$\mu_i$ are singularities for $\psi$. That gives
\begin{equation}\label{asymptotiktre}
\lim_{\ell\to-\infty}\frac{M_{\Gamma_i}^k(\ell)}{\ee^{-\ell\mu_i}(-\ell)^{-\frac{p\lambda\alpha_i}{\kappa}-1}}=\frac{\psi_{\setminus\{\mu_i\}}(\mu_i)}{\mu_i-\mu_k}\int_{-\Gamma_a}z^{-\frac{p\lambda\alpha_i}{\kappa}}\ee^{z}\,\dd
z
\end{equation}
if $i\neq k$ and
\begin{equation}\label{asymptotikfire}
\lim_{\ell\to-\infty}\frac{M_{\Gamma_i}^k(\ell)}{\ee^{-\ell\mu_i}(-\ell)^{-\frac{p\lambda\alpha_i}{\kappa}}}=\psi_{\setminus\{\mu_i\}}(\mu_i)\int_{-\Gamma_a}z^{-\frac{p\lambda\alpha_i}{\kappa}-1}\ee^{z}\,\dd
z
\end{equation}
when $i=k$. Here
$$
-\Gamma_a=\{a+(1-i)t:-\infty<t\leq 0\}\cup\{a+(-1-i)t:0\leq
t<\infty\}\,.
$$
sing (\ref{asymptotik})-(\ref{asymptotikfire}) we obtain the following
asymptotic behaviour of the determinant of the matrix $B(\ell)$,
\begin{equation}\label{determinantb}
\det(B(\ell))\sim\left(-\frac{1}{\mu_1}\prod_{i=2}^rM^i_{\Gamma_i}(\ell)\right)\,.
\end{equation}
Let $B_i$ denote $B$ with the $i$th column replaced by the
vector $[-\frac{1}{\mu_1+\zeta},\ldots,-\frac{1}{\mu_r+\zeta}]^T$, then
\begin{eqnarray}
\det(B_1(\ell))&\sim&\left(-\frac{1}{\mu_1+\zeta}\prod_{i=2}^rM^i_{\Gamma_i}(\ell)\right)\label{determinantb1}\\
\det(B_i(\ell))&\sim&\left(\Big(\frac{-1}{\mu_1}\frac{1}{\mu_i+\zeta}-\frac{-1}{\mu_i}\frac{1}{\mu_1+\zeta}\Big)\prod_{j\in\{2,\ldots,r\},j=i}M^j_{\Gamma_j}(\ell)\right)\,.\label{determinantbi}
\end{eqnarray} 
The solutions of equation (\ref{matrixligningtre}) are
obtained from Cramer's rule, and the asymptotic behaviour is determined from the
results (\ref{determinantb})-(\ref{determinantbi}). This yields 
\begin{eqnarray*}
U(\ell)&=&\frac{\det(B_1(\ell))}{\det(B(\ell))}\sim\left(\frac{\frac{-1}{\mu_1+\zeta}}{\frac{-1}{\mu_1}}\right)=\frac{\mu_1}{\mu_1+\zeta}\\
c_i(\ell)&=&\frac{\det(B_i(\ell))}{\det(B(\ell))}\sim\left(\frac{\frac{-1}{\mu_1}\frac{1}{\mu_i+\zeta}-\frac{-1}{\mu_i}\frac{1}{\mu_1+\zeta}}{\frac{-1}{\mu_1}}\frac{1}{M_{\Gamma_i}^i(\ell)}\right)
\end{eqnarray*}
with $i=2,\ldots,r$. Since all $M_{\Gamma_i}^i(\ell)$ are growing
exponentially fast the asymptotics for $f$ defined in
(\ref{fdefinition}) are easily determined, as well as the limit of the
Laplace transform for the undershoot,
\begin{eqnarray*}
\lim_{\ell\to-\infty}\EEx[\ee^{-\zeta Z}]&=&
\lim_{\ell\to-\infty}\left(\sum_{i=2}^rc_i(\ell)f_{\Gamma_i}(x)+U(\ell)f^*(x)\right)\\
 &=& \lim_{\ell\to-\infty}U(\ell)\cdot 1\\
&=& \frac{\mu_1}{\mu_1+\zeta}\,.
\end{eqnarray*}
\end{proof}

\end{document}